\documentclass[notitlepage,11pt,reqno]{amsart}
\usepackage{amssymb,amsmath,amscd,xspace,amsthm}
\usepackage[mathscr]{eucal}
\usepackage[breaklinks=true]{hyperref}
\usepackage{color}
\advance\textwidth by 1.2in \advance\oddsidemargin by -.6in \advance\evensidemargin by -.6in
\DeclareMathOperator{\ad}{\ensuremath{ad}\xspace}
\DeclareMathOperator{\im}{\ensuremath{im}\xspace}
\DeclareMathOperator{\Sym}{\ensuremath{Sym}\xspace}
\DeclareMathOperator{\Hom}{\ensuremath{Hom}\xspace}
\DeclareMathOperator{\End}{\ensuremath{End}\xspace}
\DeclareMathOperator{\Mod}{\ensuremath{Mod}\xspace}
\DeclareMathOperator{\Ext}{\ensuremath{Ext}\xspace}
\DeclareMathOperator{\ev}{\ensuremath{{ev}\xspace}}
\DeclareMathOperator{\wt}{\ensuremath{{wt}\xspace}}

\DeclareMathOperator{\op}{\ensuremath{{op}\xspace}}
\DeclareMathOperator{\id}{\ensuremath{{id}\xspace}}
\DeclareMathOperator{\gldim}{\ensuremath{{gldim}\xspace}}

\newcommand{\lie}{\mathfrak}
\newcommand{\g}{\lie{g}}
\newcommand{\h}{\lie{h}}
\newcommand{\curs}{\mathcal}

\newcommand{\F}{\ensuremath{\mathcal{F}}\xspace}
\newcommand{\G}{\ensuremath{\mathcal{G}}\xspace}
\newcommand{\ghat}{\ensuremath{\widehat{\mathcal{G}} }\xspace}

\newcommand{\po}{\preccurlyeq}
\newcommand{\lpsi}{\leq_{\Psi}}
\newcommand{\popsi}{\preccurlyeq_{\Psi}}

\newcommand{\V}{\ensuremath{{\bf V}}\xspace}
\newcommand{\vstar}{\ensuremath{{\bf V}^\circledast}\xspace}
\newcommand{\A}{\ensuremath{{\bf A}}\xspace}

\newcommand{\nn}{\mathbb{Z}_+}

\newcommand{\C}{\mathbb C}
\newcommand{\Z}{\mathbb{Z}}
\newcommand{\bp}{\ensuremath{\mathbb{P}}\xspace}

\newcommand{\univ}{{\bf U}}
\newcommand{\onto}{\twoheadrightarrow}

\newcommand{\comment}[1]{}

\newtheorem*{thm}{Theorem}
\newtheorem*{prop}{Proposition}
\newtheorem*{lem}{Lemma}
\newtheorem*{cor}{Corollary}

\theoremstyle{definition}
\newtheorem*{rem}{Remark}
\newtheorem*{defn}{Definition}

\newcommand{\thmref}[1]{Theorem~\ref{#1}}
\newcommand{\lemref}[1]{Lemma~\ref{#1}}
\newcommand{\propref}[1]{Proposition~\ref{#1}}

\DeclareMathOperator{\Ob}{\ensuremath{Ob}\xspace}

\newcounter{cnt}
\newenvironment{enumerit}{\begin{list}{{\hfill\rm(\roman{cnt})\hfill}}{%
\settowidth{\labelwidth}{{\rm(iv)}}\leftmargin=\labelwidth%
\advance\leftmargin by \labelsep\rightmargin=0pt\usecounter{cnt}}}{\end{list}} \makeatletter
\def\mydggeometry{\makeatletter\dg@YGRID=1\dg@XGRID=20\unitlength=0.003pt\makeatother}
\makeatother \theoremstyle{remark}

\numberwithin{equation}{section}

\begin{document}
\title{Faces of polytopes and Koszul algebras}

\author{Vyjayanthi Chari}\thanks{V.C.~was partially supported by the NSF
grant DMS-0091253.}
\address{V.~Chari: Department of Mathematics, University of
California at Riverside}
\email{\tt chari@math.ucr.edu}

\author{Apoorva Khare}
\address{A.~Khare: Departments of Mathematics and Statistics, Stanford
University}
\email{\tt khare@stanford.edu}

\author{Tim Ridenour}
\address{T.~Ridenour.: Department of Mathematics, Northwestern
University}
\email{\tt tbr4@math.northwestern.edu}

\date{\today}

\begin{abstract}
Let $\g$ be a semisimple Lie algebra and $V$ a $\g$--semisimple module. In
this article, we study the category $\G$ of $\Z$--graded
finite-dimensional representations of $\g \ltimes V$. We show that the
simple objects in this category are indexed by an interval-finite poset
and  produce a large class of truncated subcategories which are directed
and highest weight.
In the case when $V$ is a finite--dimensional $\g$--module, we construct
a family of Koszul algebras which are indexed by certain subsets of the
set of weights $\wt(V)$ of $V$. We use these Koszul algebras to construct
an infinite-dimensional graded subalgebra $\A_{\Psi}^{\g}$ of the locally
finite part of the algebra of invariants $(\End_{\C} (\V) \otimes \Sym
V)^{\g}$, where $\V$ is the direct sum of all simple finite-dimensional
$\g$-modules. We prove that $\A_{\Psi}^{\g}$ is Koszul of finite global
dimension.
\end{abstract}
\maketitle

\section*{Introduction}

In this paper, we study the category of finite--dimensional
representations of the semi--direct product Lie algebras $\g \ltimes V$,
where $\lie g$ is a complex semisimple
Lie algebra and $V$ is a $\g$-semisimple representation. There are
several well--known classical families of such Lie algebras, for
instance, the co--minuscule parabolic subalgebras of a simple Lie
algebra. However, our primary motivation comes from two sources: the
first is the truncated current algebras $\g \otimes \C[t] / t^r\C[t]$,
where $\C[t]$ is the polynomial ring in an indeterminate $t$, and their
multi--variable generalizations; and the second motivation is our
interest in the undeformed infinitesimal Hecke algebras (see
\cite{EGG},\cite{Pan},\cite{Kh,KT}).
The representation theory of the truncated current algebras has
interesting combinatorial properties and is connected with important
families of representations of quantum affine algebras. It appears likely
that the more general setup that we consider will also have such
connections \cite{BCFM}.

In this paper, we are primarily interested in understanding the
homological properties of the category of finite--dimensional
representations of the semi--direct product $\g \ltimes V$. To be more
precise, we shall regard the Lie algebra as being graded by the
non--negative integers $\Z_+$. We assume that $\lie g$ lives in grade
zero and that $V$ is finite-dimensional and lives in grade
one. The universal enveloping algebra of $\g\ltimes V$ is also
$\Z_+$--graded and in fact has elements of grade $s$ for all $s \in
\Z_+$.
We work with the category of $\Z$--graded modules for $\g\ltimes V$,
where the morphisms are just the degree zero maps. In the special case
when $V$ is the adjoint representation of a simple Lie algebra, this
category was previously studied in \cite{CG1} and \cite{CG2}. The authors
of those papers made certain choices which were not completely understood
or explained. In the current paper, we recover as a special case the
results of \cite{CG1} and \cite{CG2} and, using the results of
\cite{KhRi}, provide a more conceptual explanation for the choices.

We now explain the overall organization of the paper. The main result,
which is the construction of a family of Koszul algebras, is given in
Section 1 and can be stated independently of the representation theory of
$\g\ltimes V$. Thus, let $\lie h$ be a Cartan subalgebra of $\lie g$ and
let $\wt(V)$ be the set of weights of $V$. We consider the convex
polytope defined by $\wt(V)$.
For each subset $\Psi$ of $\wt(V)$ which lies on a face of this polytope,
we define a $\Z$--graded $\lie g$--module algebra $\A_\Psi$. We prove
that the (infinite--dimensional) subalgebra $\A_{\Psi}^{\g}$ of $\lie
g$--invariants is Koszul of global dimension at most equal to the sum of
the dimension of eigenspaces of $V$ corresponding to the elements of
$\Psi$.
The strategy to prove this result is the following. We first observe that
the set $\Psi$ defines in a natural way a partial ordering $\le_\Psi$ on
$\lie h^*$. Associated to each element $\nu$ of $\wt(V)$, we can define a
subalgebra  $\A_{\Psi}(\le_\Psi \nu)^{\g}$ of $\A_{\Psi}^{\g}$. We relate
this algebra to the endomorphism algebra of the projective generator  of
a full subcategory (with finitely many simple objects)  of $\Z$--graded
finite--dimensional representations of $\g\ltimes V$. This is done in
Sections two through four where we analyze the homological properties of
the category. To do this, we need to work in a bigger category which has
enough projectives.
The Kozsul complex associated to the symmetric algebra of $V$ provides a
projective resolution of the simple objects and we can compute arbitrary
extensions between the simple modules. This allows us, in Section 5, to
use results of \cite{BGS}  to prove that $\A_{\Psi}(\le_\Psi \nu)^{\g}$
is Koszul. The final step is to prove that this implies that
$\A_{\Psi}^{\g}$ is Koszul. 
A natural question that arises from our work is the realization of Koszul
duals of these algebras as module categories arising from Lie theory. The
authors hope to pursue this question in the future.

\subsection*{Acknowledgments}

We thank the referee for comments and suggestions which helped bring the
paper into its present form. We are also grateful to Jacob Greenstein for
useful discussions.

\section{The Main Results}

We shall denote by $\Z$ (resp.~$\Z_+$, $\C$) the set of integers
(resp.~non--negative integers, complex numbers).

\subsection{}

Throughout this paper, we fix a complex semisimple Lie algebra $\g$ and a
Cartan subalgebra $\h$ of $\g$. We let $R$ be the set of roots of $\g$
with respect to $\h$, and fix a set of simple roots $\Delta = \{ \alpha_i
| i \in I \}$ of $R$. If $\{ \omega_i | i \in I \}$ is a set of
fundamental weights, we denote by $P^+$ the $\Z_+$--span of the
fundamental weights, and by $Q^+ $ the $\Z_+$--span of $\Delta$.

\subsection{}

Given  $\mu \in P^+$, let $V(\mu)$ be the finite-dimensional simple
$\g$--module with highest weight $\mu$. Define $$ \V := \bigoplus_{\mu
\in P^+} V(\mu) \mbox{ and } \vstar := \bigoplus_{\mu \in P^+}
V(\mu)^*.$$ The natural embedding $\vstar \otimes \V \to \End \V$
(respectively $\vstar \otimes \V \to \End \vstar$) of $\g$-modules is an
anti-homomorphism (resp., a homomorphism).
For each $\mu \in P^+$, this anti-homomorphism (resp., homomorphism)
restricts to an isomorphism $V(\mu)^* \otimes V(\mu) \to (\End
V(\mu))^{\op}$ (resp., $V(\mu)^* \otimes V(\mu) \to \End V(\mu)^*$).
Under this isomorphism, the preimage of the identity element in $(\End
V(\mu))^{\op}$ is  the canonical $\g$--invariant element $1_{\mu} \in
V(\mu)^* \otimes V(\mu)$.

\subsection{}

Suppose that $A$ is a $\Z$-graded $\g$-module algebra; i.e., $A$  is an
associative $\Z$--graded algebra which admits a compatible action of
$\lie g$:
$$A = \bigoplus_{k \in \Z} A[k],\qquad \ \g.A[k] \subset A[k], k \in
\Z.$$

\noindent The space
$$\A= A \otimes \vstar \otimes \V,$$

\noindent has a natural $\Z$--grading given by
$$\A[k] = A[k] \otimes \vstar \otimes \V.$$

\noindent Moreover, it acquires the structure of a $\Z$--graded
$\g$--module algebra as follows: the multiplication is given by linearly
extending the assignment
$$(a\otimes f \otimes v)(b \otimes g \otimes w) = g(v) ab \otimes f
\otimes w,$$

\noindent while the $\g$--module structure is just given by the usual
action on tensor products of $\g$--modules.

Abusing notation, we set
$$1_{\mu} = 1_A \otimes 1_{\mu}.$$
The next Lemma is immediate.

\begin{lem}\label{L21}
For $\mu, \nu \in P^+$, we have,
$$1_{\mu} \A 1_{\nu} = A \otimes V(\mu)^* \otimes
V(\nu).$$\hfill\qedsymbol
\end{lem}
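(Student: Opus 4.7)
The plan is a direct unpacking of the definitions. First I would fix, for each $\mu \in P^+$, a basis $\{v_i^\mu\}$ of $V(\mu)$ and take $\{f_i^\mu\}$ to be the dual basis of $V(\mu)^*$. Under the identification $V(\mu)^* \otimes V(\mu) \cong (\End V(\mu))^{\op}$ described immediately before the lemma, the canonical $\g$-invariant $1_\mu$ is the preimage of the identity, so it is given explicitly by
$$1_\mu = 1_A \otimes \sum_i f_i^\mu \otimes v_i^\mu, \qquad 1_\nu = 1_A \otimes \sum_j f_j^\nu \otimes v_j^\nu.$$

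Next I would compute left multiplication by $1_\mu$ on a typical homogeneous element $a \otimes f \otimes v \in \A$, with $a \in A[k]$, $f \in V(\lambda)^*$ and $v \in V(\lambda')$. Using the product rule $(a'\otimes f'\otimes v')(b\otimes g\otimes w) = g(v')\, a'b\otimes f'\otimes w$, together with the convention (forced by the direct-sum decompositions of $\vstar$ and $\V$) that the natural pairing $V(\mu)^*\times V(\lambda)\to \C$ vanishes when $\lambda\neq \mu$, one finds
$$1_\mu\cdot(a\otimes f\otimes v)=\sum_i f(v_i^\mu)\, a\otimes f_i^\mu\otimes v.$$
This is zero if $\lambda\neq \mu$, while if $\lambda=\mu$ the duality identity $\sum_i f(v_i^\mu)f_i^\mu=f$ recovers $a\otimes f\otimes v$. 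Thus left multiplication by $1_\mu$ is precisely the projection of $\A$ onto $A\otimes V(\mu)^*\otimes \V$.

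A symmetric computation, now pairing $f_j^\nu$ with $v$, shows that right multiplication by $1_\nu$ is the projection onto $A\otimes \vstar\otimes V(\nu)$. Composing the two projections yields $1_\mu\A 1_\nu=A\otimes V(\mu)^*\otimes V(\nu)$, as claimed. There is no real obstacle; the only point that needs attention is bookkeeping of the direct-sum structure of $\vstar$ and $\V$ so that pairings between components of distinct indices vanish, which is exactly what makes $1_\mu$ and $1_\nu$ act as orthogonal projectors on the corresponding isotypic pieces.
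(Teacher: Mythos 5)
Your proof is correct, and it is exactly the direct unwinding of the multiplication rule and the duality identity that the authors have in mind when they label the lemma ``immediate'' (the paper offers no written argument). The only minor point worth stating explicitly is that left multiplication by $1_\mu$ and right multiplication by $1_\nu$ commute because they act on disjoint tensor factors, which is what makes the composite a projection onto the intersection $A\otimes V(\mu)^*\otimes V(\nu)$ and gives the claimed equality of subspaces.
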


\subsection{}

From now on, we fix a finite-dimensional $\g$--module $V$ and write
$$V=\oplus_{\mu\in\lie h^*}V_\mu,\ \ V_\mu=\{v\in V: hv=\mu(h)v\ \
\forall h\in\h\}.$$

\noindent Set $\wt(V) = \{ \mu \in \h^*: V_\mu \ne 0\}$ and assume that
$\wt(V) \neq \{ 0 \}$. We shall also set
$$V^{\lie g}=\{v\in V: x v=0, \ \ \forall\ \ x\in\lie g\}.$$

Suppose that $\Psi \subset \wt(V)$ is nonempty. Define a reflexive,
transitive relation $\lpsi$ on $\h^*$ via
$$\mu \lpsi \nu \iff \nu - \mu \in \Z_+ \Psi,$$

\noindent and set
$$d_{\Psi}(\mu,\nu) := \min \{ \sum_{\beta \in \Psi} m_{\beta} : \nu -
\mu = \sum_{\beta \in \Psi} m_{\beta} \beta, \ m_{\beta} \in \Z_+\
\forall \beta \in \Psi \}. \label{dist}$$

\subsection{}

For $\mu \lpsi \nu \in P^+$, define
$$\A_{\Psi}(\nu,\mu) := 1_{\nu}\A[d_{\Psi}(\mu,\nu)]1_{\mu},$$

\noindent and given $F \subset P^+$, define
$$ \A_{\Psi}(F) := \bigoplus_{{\mu,\nu \in F, \\ \mu \lpsi \nu}}
\A_{\Psi}(\nu,\mu).$$
Note that $\A_\Psi(F)$ is a $\lie g$--module.

\begin{lem}
Suppose that $\Psi \subset \wt(V)$ is the set of weights of $V$ which lie
on some proper face of the weight polytope of $V$, and let $A$ be as
above.
\begin{enumerit}
\item $\A_{\Psi}(F)$ is a graded subalgebra of $\A_{\Psi}(G)$ for all $F
\subset G \subset P^+$.

\item If $F \subset G \subset P^+$, then $\A_{\Psi}(F)^{\g}$ is a graded
subalgebra of $\A_{\Psi}(G)^{\g}$.
\end{enumerit}
\end{lem}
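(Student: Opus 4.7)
The plan is to reduce the question to a purely combinatorial statement about $d_\Psi$, namely that for $\Psi$ on a proper face of the weight polytope, $d_\Psi$ is additive along chains: if $\mu \lpsi \nu \lpsi \lambda$, then $d_\Psi(\mu,\lambda) = d_\Psi(\mu,\nu) + d_\Psi(\nu,\lambda)$. First, by \lemref{L21}, the summand $\A_\Psi(\nu,\mu) = 1_\nu \A[d_\Psi(\mu,\nu)] 1_\mu$ equals $A[d_\Psi(\mu,\nu)] \otimes V(\nu)^* \otimes V(\mu)$. To prove (i), since $\A_\Psi(F) \subset \A_\Psi(G)$ is clearly a graded $\C$-subspace whenever $F \subset G$, it suffices to check that $\A_\Psi(F)$ is closed under multiplication. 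So take $x \in \A_\Psi(\nu_1,\mu_1)$ and $y \in \A_\Psi(\nu_2,\mu_2)$ with $\mu_i, \nu_i \in F$, and consider the product using the multiplication law $(a \otimes f \otimes v)(b \otimes g \otimes w) = g(v)\, ab \otimes f \otimes w$. Since $g \in V(\nu_2)^*$ and $v \in V(\mu_1)$, the pairing $g(v)$ vanishes unless $\mu_1 = \nu_2$, so we may assume this. Then $xy \in A[d_\Psi(\mu_1,\nu_1) + d_\Psi(\mu_2,\nu_2)] \otimes V(\nu_1)^* \otimes V(\mu_2)$, and transitivity of $\lpsi$ gives $\mu_2 \lpsi \nu_2 = \mu_1 \lpsi \nu_1$, so $\A_\Psi(\nu_1,\mu_2)$ is a well-defined summand of $\A_\Psi(F)$.

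The only nontrivial point is that the $A$-degree of $xy$ matches $d_\Psi(\mu_2,\nu_1)$, and this is where the face hypothesis enters. Since $\Psi$ lies on a proper face of the weight polytope of $V$, there exists a linear functional $\ell \in \h^{**}$ and a constant $c \neq 0$ such that $\ell(\beta) = c$ for every $\beta \in \Psi$. Consequently, for any $\mu \lpsi \lambda$ and any expression $\lambda - \mu = \sum_{\beta \in \Psi} m_\beta \beta$ with $m_\beta \in \Z_+$, applying $\ell$ yields $\sum_\beta m_\beta = \ell(\lambda - \mu)/c$, so the sum is independent of the decomposition and $d_\Psi(\mu,\lambda) = \ell(\lambda-\mu)/c$. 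Additivity along chains is then immediate from linearity of $\ell$, which gives $d_\Psi(\mu_1,\nu_1) + d_\Psi(\mu_2,\nu_2) = d_\Psi(\nu_2,\nu_1) + d_\Psi(\mu_2,\nu_2) = d_\Psi(\mu_2,\nu_1)$ under the identification $\mu_1 = \nu_2$. Thus $xy \in \A_\Psi(\nu_1,\mu_2) \subset \A_\Psi(F)$, proving (i).

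Part (ii) is then essentially formal. The grading on $\A_\Psi(G)$ and the $\g$-module structure commute (the $\g$-action preserves each $\A_\Psi(\nu,\mu)$ as a tensor product of $\g$-modules), so $\A_\Psi(F)$ is a graded $\g$-submodule of $\A_\Psi(G)$. Hence $\A_\Psi(F)^\g = \A_\Psi(F) \cap \A_\Psi(G)^\g$ is a graded $\C$-subspace of $\A_\Psi(G)^\g$, and by (i) it is closed under the multiplication of $\A_\Psi(G)^\g$, giving the required graded subalgebra. The main obstacle I anticipate is the additivity step, but as sketched it is a one-line consequence of the existence of a supporting functional $\ell$ constant on $\Psi$, which is exactly what "lies on a face" provides.
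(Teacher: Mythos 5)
Your proof is correct, and it follows the same overall strategy as the paper: reduce everything to showing that $d_\Psi$ is additive along $\leq_\Psi$-chains, after which closure under multiplication is a direct computation with the rule $(a\otimes f\otimes v)(b\otimes g\otimes w)=g(v)ab\otimes f\otimes w$ and part (ii) follows formally because $\A$ is a $\g$-module algebra. Where you diverge is in how additivity is established. The paper defers to Proposition~\ref{P72}, whose proof runs through the combinatorial condition \eqref{rigid} together with the result of \cite{KhRi} that \eqref{rigid} is equivalent to lying on a proper face of the weight polytope. You instead use the geometric definition of a face directly: a supporting functional $\ell$ with $\ell\equiv c$ on $\Psi$ forces every expression $\lambda-\mu=\sum_{\beta\in\Psi}m_\beta\beta$ to have $\sum m_\beta=\ell(\lambda-\mu)/c$, so $d_\Psi$ is well-defined and additive by linearity of $\ell$. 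This is cleaner and more elementary for this lemma in isolation. One small point you assert without justification is that $c\neq 0$; this does hold, because the weight polytope is Weyl-invariant and so $0$ lies in its relative interior (it is the only Weyl-fixed point), hence $0$ is not on any proper face and $\ell(0)=0<c$. The paper's choice of route is not gratuitous, however: condition \eqref{rigid} is the form of the face hypothesis that does all the real work later in the paper (in Proposition~\ref{P72}(ii), Lemma~\ref{L74}, and Lemma~\ref{L75}), so establishing and reusing it is economical there, whereas for this lemma alone your supporting-functional argument is the shorter path.
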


This result is clear once we show that $d_\Psi$ satisfies the following
in \propref{P72}:
\begin{equation}
d_\Psi(\eta, \mu) + d_\Psi(\mu,\nu) = d_\Psi(\eta,\nu)\ \forall \ \eta
\leq_\Psi \mu \leq_\Psi \nu \in \h^*.
\end{equation}

\subsection{}

Given $\mu,\nu \in P^+$, define $[\mu,\nu]_{\Psi} := (\lpsi \nu) \cap
(\mu \lpsi)$, where
$$\lpsi \nu := \{\eta \in P^+ : \eta \lpsi \nu\}, \mbox{ and } \mu \lpsi
:= \{\eta \in P^+ : \mu \lpsi \eta\}.$$

The main theorem of this paper is the following.

\begin{thm}\label{T1}
Suppose that $\Psi$ is a subset of $\wt(V)$ which lies on some proper
face of the weight polytope of $V$, and let $A$ be the symmetric algebra
of $V$.
Given $\mu \lpsi \nu \in P^+$, the algebras $\A_{\Psi}^{\lie g}$,
$\A_{\Psi}{(\lpsi \nu)^{\lie g}}$, $\A_\Psi{(\mu \lpsi)}^{\lie g}$, and
$\A_\Psi{([\mu,\nu]_{\Psi})^{\lie g}}$ are Koszul with global dimension
at most $N_{\Psi} := \sum_{\xi \in \Psi} \dim V_{\xi}$.
Moreover, there exist $\mu \lpsi \nu \in P^+$ such that the global
dimension of all these algebras is exactly $N_{\Psi}$.
\end{thm}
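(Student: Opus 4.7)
The strategy is to prove the theorem first for the finite-dimensional truncations $\A_\Psi([\mu,\nu]_\Psi)^{\lie g}$ and then to bootstrap to the three remaining algebras by passing to a filtered colimit.

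First, I would realise $\A_\Psi([\mu,\nu]_\Psi)^{\lie g}$ as the opposite of the endomorphism algebra of a projective generator in an appropriate truncation of a larger category of graded $\g \ltimes V$-modules. The category $\G$ of $\Z$-graded finite-dimensional $\g$-semisimple $\g\ltimes V$-modules has simple objects $L(\lambda)\tangle{s}$ indexed by $P^+ \times \Z$, and the partial order $\lpsi$ together with the assumption that $\Psi$ lies on a proper face of the weight polytope provides the interval-finite highest-weight structure outlined in the introduction, so that the full subcategory $\G_F$ generated by simples with label in $F=[\mu,\nu]_\Psi$ is a well-behaved truncation. To obtain enough projectives I would enlarge $\G$ to a category $\ghat$ of graded (not necessarily finite-dimensional) $\g$-semisimple modules, in which $P(\lambda):=\Sym V\otimes V(\lambda)$ is the projective cover of $L(\lambda)$. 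By Lemma~\ref{L21}, the space $(1_\mu \A 1_\nu)^{\g}$ is canonically $\Hom_{\g}(V(\mu),\Sym V\otimes V(\nu))=\Hom_{\ghat}(P(\mu),P(\nu))$, and restricting to grade $d_\Psi(\mu,\nu)$ on a suitable projective generator realises $\A_\Psi([\mu,\nu]_\Psi)^{\lie g}$ as the claimed endomorphism algebra.

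Second, I would compute Ext groups between simples using the Koszul complex $\Sym V\otimes\Lambda^\bullet V\to \C$ of the trivial $\Sym V$-module. Tensoring with $V(\lambda)$ yields a projective resolution of $L(\lambda)$ in $\ghat$, so
\[
\Ext^k_{\ghat}\bigl(L(\mu),\, L(\nu)\tangle{k}\bigr)\;\cong\;\Hom_{\g}\bigl(V(\mu),\,\Lambda^k V\otimes V(\nu)\bigr).
\]
The Koszul property, via the BGS criterion, reduces to the \emph{linear concentration} statement: $\Ext^k_{\ghat}(L(\mu),L(\nu)\tangle{t})=0$ unless $t=k$. This is precisely where the face hypothesis does the essential work. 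A weight $\nu-\mu$ of $\Lambda^k V$ is a sum of $k$ distinct weights of $V$ (counted via basis vectors), and since $\nu-\mu\in\Z_+\Psi$ with $k=d_\Psi(\mu,\nu)$, the face condition forces every such summand to lie in $\Psi$; extracting this combinatorial dichotomy, together with the additivity of $d_\Psi$ from Proposition~\ref{P72}, is the main technical obstacle and completes the concentration argument.

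Third, the global dimension bound follows because $\Lambda^k\bigl(\bigoplus_{\xi\in\Psi}V_\xi\bigr)=0$ for $k>N_\Psi$ and only $\Psi$-weights contribute to Ext by the previous step. For sharpness, I would choose $\mu\lpsi\nu\in P^+$ with $\nu-\mu$ equal to the sum with multiplicity of a basis of $\bigoplus_{\xi\in\Psi}V_\xi$, and $\mu$ sufficiently dominant; then the top wedge yields a nonzero contribution to $\Ext^{N_\Psi}$, already inside the finite truncation. Finally, to pass from the truncated algebras to $\A_\Psi^{\lie g}$, $\A_\Psi(\lpsi\nu)^{\lie g}$, and $\A_\Psi(\mu\lpsi)^{\lie g}$, I would note that each is the filtered colimit of its $\A_\Psi(F)^{\lie g}$ over finite $F\subset P^+$. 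Since the quadratic generators and relations, the Ext-concentration between any fixed pair of simples, and the vanishing of $\Ext^k$ for $k>N_\Psi$ are all detected in some finite truncation, Koszulity and the sharp global dimension bound $N_\Psi$ pass to the colimit.
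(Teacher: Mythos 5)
Your treatment of the finite truncations $\A_\Psi([\mu,\nu]_\Psi)^{\g}$ and $\A_\Psi(\lpsi\nu)^{\g}$ is essentially the paper's: realize the algebra as $\End_{\ghat}$ of a projective generator of a directed highest-weight truncation (Lemma~\ref{L76} plus Proposition~\ref{P54}), compute $\Ext$ between simples via the Koszul--Chevalley--Eilenberg resolution (Proposition~\ref{P61}), use the face condition \eqref{rigid} to force the diagonal concentration, and invoke BGS. The paper phrases the final step via the numerical criterion \cite[Theorem 2.11.1]{BGS} rather than the concentration criterion, but these are interchangeable. Your sharpness argument also matches Lemma~\ref{L74}.

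Where you genuinely diverge is the passage from the truncations to $\A_\Psi^{\g}$ and $\A_\Psi(\mu\lpsi)^{\g}$. The paper does \emph{not} take a colimit route for Koszulity: it separately establishes finite global dimension by exhibiting the projective cover $P_\mu = \A_\Psi(\mu\lpsi)^{\g} 1_\mu$ and reducing to the truncated case, and then proves Koszulity by showing the algebra is quadratic (via the kernel of $\mathbf{T}V \to \A$) and that its Koszul complex is exact, citing \cite[Theorem 2.6.1]{BGS} (following \cite{CG2}, \cite{Ri}). Your colimit approach can be made to work, but as stated it elides the crucial point. ``Koszulity is detected in finite truncations'' is not a formal tautology: for an idempotent $e_F = \sum_{\lambda\in F} 1_\lambda$, the Peirce truncation $\A_\Psi(F)^{\g} = e_F\A_\Psi^{\g}e_F$ does not automatically control $\Ext^\bullet_{\A_\Psi^{\g}}(S_\mu,S_\nu)$; one needs the exact functor $M\mapsto e_FM$ to carry a projective resolution of $S_\mu$ over $\A_\Psi^{\g}$ to a projective resolution over $\A_\Psi(F)^{\g}$. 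This works precisely when $F$ is $\lpsi$--\emph{down-closed}, since then $e_F P_\lambda$ is zero for $\lambda\notin F$ and projective for $\lambda\in F$; for $F$ merely interval-closed (your stated choice $[\mu,\nu]_\Psi$) this fails, as $e_FP_\lambda$ can be a nonzero non-projective. You also need the directed system of finite down-closed subsets to exhaust $P^+$, which uses the finiteness of $\lpsi\nu$ asserted in the paper. With those two points supplied, your colimit argument goes through and is arguably cleaner than the quadraticity/Koszul-resolution route; without them, there is a real gap at the final step.
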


\section{The categories $\curs{G}$ and $\ghat$}

In this section, we define and study the elementary properties of the
category of $\Z$--graded finite--dimensional representations of $\g
\ltimes V$. We classify the simple objects in this category and describe
their projective covers. We denote by $\univ(\lie b)$ the universal
enveloping algebra of a Lie algebra $\lie b$. We use freely the notation
established in Section 1.

\subsection{}

We begin by working in the following general situation. Thus, we assume
that $\lie{a}$ is a $\Z_+$-graded complex Lie algebra,
$$\lie{a} = \displaystyle \bigoplus_{n \in \Z_+} \lie{a}_n,$$

\noindent with the additional assumptions that $\lie a_0=\lie g$ and
$\dim \lie{a}_n<\infty$ for all $n\in\Z_+$. Set $\lie{a}_+ = \bigoplus_{n
>0} \lie{a}_n$ and note that it is a $\Z_+$--graded ideal in $\lie a$.

Set $R^+=R\cap Q^+$ and fix a Chevalley basis $\{x_{\alpha}^{\pm}, h_i :
\alpha \in R^+, \ i \in I\}$ of $\lie g$, and
$$\lie n^\pm=\bigoplus_{\alpha\in R^+}\lie g_{\pm\alpha},\ \ \lie g=\lie
n^-\oplus\lie h\oplus\lie n^+.$$

\subsection{}

Let $\F(\g)$ be the semisimple tensor category whose objects are
finite-dimensional $\g$-modules and morphisms are maps of $\g$--modules.
The simple objects of $\F(\g)$ are just the modules $V(\lambda)$,
$\lambda\in P^+$. We shall need the fact that $V(\lambda)$ is generated
by an element $v_\lambda$ with relations:
\begin{equation}\label{defvlambda}
\lie n^+ v_\lambda=0,\ \ hv_\lambda=\lambda(h)v_\lambda,\ \
(x^-_{\alpha_i})^{\lambda(h_i)+1}v_\lambda=0,
\end{equation}
\noindent for all $h\in\lie h$ and $i\in I$.
Any object $V$  of $\F(\g)$ is a weight module, i.e.,
$$V=\bigoplus_{\mu \in \lie h^*} V_\mu,\ \ V_\mu = \{ v \in V : hv =
\mu(h)v,\ \ h \in \lie h \},$$

\noindent and we set $\wt(V) = \{\mu \in \lie h^* : V_\mu \ne 0\}.$

Let $\ghat$ be the category whose objects are $\Z$-graded
$\lie{a}$-modules $V$ with finite-dimensional graded components $V[r]$;
i.e.,
$$V=\bigoplus_{r\in\Z}V[r],\qquad V[r]\in \Ob\F(\g)\ \forall r \in \Z.$$

\noindent The morphisms in $\ghat$ are $\lie{a}$-module maps $f: V \to W$
such that $f(V[r]) \subset W[r]$ for all $r\in\Z$. For $V\in\ghat$, we
have
$$V_\mu = \bigoplus_{r\in\Z} V_\mu[r],\qquad V_\mu[r]=V_\mu\cap V[r].$$

\noindent Observe that the adjoint representation of $\lie a$ is an
object of $\ghat$.

Let $\curs{G}$ be the full subcategory of $\ghat$ given by
$$V\in\Ob\curs{G}\iff V\in\Ob\ghat,\ \ \dim V<\infty.$$

\noindent Given $V\in\Ob\F(\g)$, let $\ev(V)\in\Ob\curs{G}$ be given by
$$\ev(V)[0] = V,\ \ \ev(V)[k] = 0 \ \forall k > 0,$$

\noindent with the $\lie{a}$-module structure defined by setting
$\lie{a}_+\ev(V) =0$ and leaving the $\g$--action unchanged.
Clearly, any $\lie g$--module morphism extends to a morphism of graded
$\lie a$--modules, and we have a covariant functor $\ev: \F(\g) \to
\curs{G}$.

\subsection{}

For $r \in \Z$, define a grading shift operator  $\tau_r: \ghat \to
\ghat$ via $$\tau_r(V)[k] = V[k-r].$$ For $\lambda\in P^+$ and $r\in\Z$,
set
$$V(\lambda,r) = \tau_r\ev V(\lambda),\ \ v_{\lambda,r} = \tau_r
v_\lambda.$$

\begin{prop}\label{P31}
For $(\lambda,r)\in P^+\times\Z$ we have that $V(\lambda,r)$ is a simple
object in $\ghat$. Moreover if $(\mu,s)\in P^+\times\Z$, then,
$$V(\lambda,r)\cong_{\ghat} V(\mu,s)\iff \lambda=\mu,\ \ {\rm{and}}\   \
r=s.$$
Conversely, if $M\in \ghat$ is simple then
$$M\cong V(\lambda,r),\ \ {\rm{ for\ some}}\ \ (\lambda,r)\in
P^+\times\Z.$$
\end{prop}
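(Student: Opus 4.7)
My plan is to verify the three claims in sequence, then handle the classification of simple objects via a minimality-of-grade argument enabled by simplicity.

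First, to show that $V(\lambda,r)$ is simple in $\ghat$, let $W$ be a nonzero graded $\lie{a}$-submodule. Since $V(\lambda,r)$ is concentrated in degree $r$, so is $W$; and because $\lie{a}_+$ annihilates $\ev V(\lambda)$, the $\lie{a}$-stability of $W$ reduces to $\lie g$-stability. The simplicity of $V(\lambda)$ in $\F(\g)$ then forces $W = V(\lambda,r)$. For the isomorphism classification, any nonzero morphism $\varphi: V(\lambda,r) \to V(\mu,s)$ in $\ghat$ preserves grading, and the source (resp.\ target) is concentrated in degree $r$ (resp.\ $s$); so $\varphi$ can only be nonzero if $r = s$, and in that case restricting $\varphi$ to degree $r$ gives a $\lie g$-module isomorphism $V(\lambda) \to V(\mu)$, whence $\lambda = \mu$.

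For the converse, let $M \in \ghat$ be simple, and pick any $0 \neq m \in M[r_0]$ for some $r_0 \in \Z$. Since $\lie{a}$ is nonnegatively graded, $\univ(\lie{a}) \cdot m$ is a nonzero graded $\lie{a}$-submodule of $M$ contained in $\bigoplus_{k \geq r_0} M[k]$, and simplicity forces this submodule to be all of $M$; hence $M[k] = 0$ for $k < r_0$. Now $\lie{a}_+ M$ is another graded $\lie{a}$-submodule, lying in $\bigoplus_{k > r_0} M[k]$ and thus proper, so simplicity gives $\lie{a}_+ M = 0$. Consequently each graded piece $M[k]$ is itself a graded $\lie{a}$-submodule of $M$ (it is $\lie g$-stable because $\lie g$ preserves grading, and $\lie{a}_+$ acts as zero), so $M$ must be concentrated in a single degree $r$. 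The $\lie{a}$-submodules of $M$ are then precisely the $\lie g$-submodules of $M[r]$, so simplicity of $M$ forces $M[r]$ to be a simple object of $\F(\g)$, hence isomorphic to $V(\lambda)$ for some $\lambda \in P^+$; this yields $M \cong V(\lambda,r)$ in $\ghat$.

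The main subtlety is that a priori $M$ is allowed to have nonzero components in arbitrarily negative degrees; the key move is to exploit simplicity together with the nonnegativity of the grading on $\lie{a}$ to produce a lowest nonzero degree, after which both the vanishing of $\lie{a}_+ M$ and the single-degree conclusion follow quickly.
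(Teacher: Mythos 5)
Your proof is correct and rests on the same key observation as the paper's, namely that because $\lie{a}$ is nonnegatively graded, the truncation $\bigoplus_{k\geq r}M[k]$ of any $M\in\ghat$ is a graded submodule. The paper applies this directly: if $M$ were nonzero in two degrees $s<r$, then $\bigoplus_{k\geq r}M[k]$ would be a nonzero proper submodule, contradicting simplicity; hence $M$ is concentrated in a single degree and the conclusion is immediate. You instead first show that $M$ is bounded below in degree (via $\univ(\lie a)\cdot m=M$), then deduce $\lie a_+M=0$ from simplicity, and finally use that to make each $M[k]$ a submodule. This works, but notice that your step 1 already does all the work: if $M[r_0]\ne 0$ and $M[r_1]\ne 0$ with $r_0<r_1$, applying your argument at $r_1$ forces $M[r_0]=0$, a contradiction. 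So steps 2 and 3, while correct and establishing a true and useful fact ($\lie a_+$ must act trivially on any simple object of $\ghat$), are redundant for the stated conclusion.
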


\begin{proof}
The first two statements are trivial. Suppose now that $M$ is a simple
object of $\ghat$ and that $r,s\in\Z$ are such that $M[r]\ne 0$ and
$M[s]\ne 0$, and assume that $r>s$. Then the subspace $\oplus_{k \geq
r}M[k] \ne 0$ and is a proper submodule of $M$, contradicting
the fact that $M$ is simple. Hence there must exist a unique $r \in \Z$
such that $M[r]\ne 0$. In particular, we have
$$M\cong_{\ghat}\tau_r \ev M[r],$$
and also that $M[r]\cong V(\lambda)$ for some $\lambda\in P^+$. This
completes the proof of the Proposition.
\end{proof}

From now on, we set $\Lambda = P^+ \times \Z$ and observe that this set
parametrizes the set of simple objects in $\ghat$ and $\curs{G}$. Given
$V\in\Ob\ghat$, we set
$$[V:V(\lambda,r)]=\dim\Hom_{\lie g}(V(\lambda), V[r]).$$

\subsection{}

We now turn our attention to constructing projective resolutions of the
simple objects of $\ghat$. The algebra $\univ(\lie a)$ has a
$\Z_+$--grading inherited from the grading on $\lie a$: namely, the grade
of a monomial $a_1\cdots a_k$, where $a_i\in \lie a_{s_i}$, is
$s_1+\cdots +s_k$. The ideal $\univ(\lie a_+)$ is a graded ideal with
finite--dimensional graded pieces. By the Poincare--Birkhoff--Witt
Theorem, we have an isomorphism of $\Z_+$-graded vector spaces
$$\univ(\lie a)\cong\univ(\lie a_+)\otimes\univ(\lie g).$$

\noindent If we regard $\lie a_+$ as a $\lie g$--module via the adjoint
action, then we have the following result, again by using the
Poincare--Birkhoff--Witt Theorem.

\begin{prop}\label{P32}
As $\g$-modules,
$$\univ(\lie{a}_+)[k] \cong \bigoplus_{(r_1,\dots,r_k)\in \Z_+^k \ : \
\sum_{j=1}^k jr_j = k} \Sym^{r_1}(\lie{a}_1) \otimes \dots \otimes
\Sym^{r_k}(\lie{a}_k).$$\hfill\qedsymbol
\end{prop}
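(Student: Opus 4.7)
The plan is to combine the Poincar\'e--Birkhoff--Witt theorem with the direct sum decomposition $\lie{a}_+=\bigoplus_{j\geq 1}\lie{a}_j$ and then track the grading.

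First, I would choose an ordered homogeneous basis of $\lie{a}_+$ by concatenating bases of the finite-dimensional graded pieces $\lie{a}_j$ for $j\geq 1$ (say in order of increasing $j$). PBW then identifies $\univ(\lie{a}_+)$ as a vector space with the span of ordered monomials in this basis. More importantly, the symmetrization map
$$\phi:\Sym(\lie{a}_+)\longrightarrow\univ(\lie{a}_+),\qquad x_1\cdots x_n\longmapsto \frac{1}{n!}\sum_{\sigma\in S_n}x_{\sigma(1)}\cdots x_{\sigma(n)},$$
is an isomorphism of $\g$-modules, since the adjoint action of $\g=\lie{a}_0$ preserves each homogeneous component $\lie{a}_j$ and commutes with $\phi$.

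Next, since $\lie{a}_+ = \bigoplus_{j\geq 1}\lie{a}_j$, the symmetric algebra factors as a $\g$-module isomorphism
$$\Sym(\lie{a}_+)\cong\bigotimes_{j\geq 1}\Sym(\lie{a}_j),$$
and hence $\phi$ induces a $\g$-module isomorphism $\univ(\lie{a}_+)\cong\bigotimes_{j\geq 1}\Sym(\lie{a}_j)$.

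Finally, I would match up the $\Z_+$-gradings. The grading on $\univ(\lie{a}_+)$ assigns to a monomial $a_{i_1}\cdots a_{i_n}$ (with $a_{i_\ell}\in\lie{a}_{j_\ell}$) the weight $\sum_\ell j_\ell$; on the symmetric algebra side, the summand $\Sym^{r_1}(\lie{a}_1)\otimes\Sym^{r_2}(\lie{a}_2)\otimes\cdots$ is placed in degree $\sum_{j\geq 1} jr_j$. Since $\phi$ is built from symmetrization of monomials of the same weight, it preserves this grading. Restricting to degree $k$ gives the direct sum over tuples $(r_1,r_2,\ldots)$ of non-negative integers with $\sum_{j\geq 1} jr_j=k$, and the constraint forces $r_j=0$ for every $j>k$, so the tensor product truncates at $j=k$ as in the statement.

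There is no real obstacle here beyond bookkeeping: the only point requiring a moment of care is to check that symmetrization, rather than the tautological PBW identification via ordered monomials, is used so that the isomorphism is $\g$-equivariant; once that is in place, the grading assertion is immediate from the definition of the grading on $\univ(\lie{a})$.
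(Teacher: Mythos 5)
Your proof is correct and takes essentially the same approach as the paper, which simply cites the Poincaré--Birkhoff--Witt Theorem without giving details. Your observation that one should use the symmetrization form of PBW to get $\g$-equivariance directly (rather than the ordered-monomial identification) is the right technical point to flag; the grading bookkeeping and the truncation of the tensor product at $j=k$ are all handled correctly.
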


Given $M \in \Ob\ghat$, we can regard $\univ(\lie{a}) \otimes_{\univ(\g)}
M$ as a module for $\lie a$ by left multiplication. Moreover, if we set
$$(\univ(\lie{a}) \otimes_{\univ(\g)} M)[k] = (\univ(\lie{a}_+) \otimes
M)[k] = \bigoplus_{i\in\Z_+} (\univ(\lie{a}_+)[i] \otimes M[k-i])$$

\noindent then we have the following Corollary of Proposition \ref{P32}.

\begin{cor}\label{c33}
For all $M\in\Ob\ghat$ with $M[s]=0$ for $s\ll 0$, we have
$\univ(\lie{a}) \otimes_{\univ(\g)} M\in\Ob\ghat$. \hfill\qedsymbol
\end{cor}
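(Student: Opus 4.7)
The plan is to use the PBW decomposition $\univ(\lie a) \cong \univ(\lie a_+) \otimes \univ(\g)$ (which is in particular a right $\univ(\g)$-module isomorphism) to identify $\univ(\lie a) \otimes_{\univ(\g)} M$ with $\univ(\lie a_+) \otimes M$ as a $\C$-vector space, and then verify that the grading recorded in the statement turns this identification into an object of $\ghat$.

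First, under this identification the decomposition in the statement is just the natural tensor-product grading inherited from $\univ(\lie a_+) = \bigoplus_i \univ(\lie a_+)[i]$ and $M = \bigoplus_j M[j]$, so its direct sum is the whole module. Second, each graded piece is finite-dimensional: \propref{P32} realizes $\univ(\lie a_+)[i]$ as a finite sum of tensor products of the form $\Sym^{r_1}(\lie a_1) \otimes \cdots \otimes \Sym^{r_i}(\lie a_i)$, each of which is finite-dimensional since $\dim \lie a_j < \infty$; meanwhile the hypothesis $M[s] = 0$ for $s \ll 0$ restricts the defining sum $\bigoplus_i \univ(\lie a_+)[i] \otimes M[k-i]$ to finitely many indices $i$, and each summand is finite-dimensional because $M \in \Ob \ghat$. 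Third, each graded piece is an object of $\F(\g)$, because $\univ(\lie a_+)[i]$ carries the adjoint $\g$-action and $M[k-i]$ is a $\g$-submodule of $M$, so the tensor product is a finite-dimensional $\g$-module.

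Finally I would check that left multiplication by $\lie a$ respects the grading. For $x \in \lie a_j$ with $j > 0$ this is immediate from $x \cdot (u \otimes m) = xu \otimes m \in \univ(\lie a_+)[i+j] \otimes M[k-i]$; for $x \in \g$, using $xu = \ad(x)(u) + ux$ in $\univ(\lie a)$ together with the relation $ux \otimes m = u \otimes xm$ in the tensor product over $\univ(\g)$, one rewrites $x \cdot (u \otimes m) = \ad(x)(u) \otimes m + u \otimes xm$, which still lies in grade $k$ because the adjoint $\g$-action preserves each $\univ(\lie a_+)[i]$. There is no real obstacle here; the only substantive input is the finite-dimensionality of the graded pieces of $\univ(\lie a_+)$ supplied by \propref{P32}, together with the boundedness-below hypothesis on $M$ which cuts the defining direct sum down to a finite one.
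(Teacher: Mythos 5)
Your argument is correct and follows exactly the route the paper intends: the PBW identification, the finite-dimensionality of the graded pieces of $\univ(\lie a_+)$ from \propref{P32}, and the bounded-below hypothesis to make each $\bigoplus_i \univ(\lie{a}_+)[i] \otimes M[k-i]$ a finite sum. The paper states this as an immediate corollary of \propref{P32} without spelling out the details; you have simply supplied them.
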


\subsection{}

For $(\lambda,r)\in\Lambda$, set
$$P(\lambda,r)=\univ(\lie{a}) \otimes_{\univ(\g)}
V(\lambda,r)\in\Ob\ghat,\ \ p_{\lambda,r}=1\otimes v_{\lambda,r}.$$

\begin{prop}\label{P41}
\begin{enumerit}
\item For $(\lambda,r) \in \Lambda$, we have that $P(\lambda,r)$ is
generated as a $\Z$--graded $\lie{a}$-module by $p_{\lambda,r}$ with
defining relations
\begin{equation}\label{defplambda}
\lie{n}^+p_{\lambda,r} = 0, \ \ \ hp_{\lambda,r} =
\lambda(h)p_{\lambda,r},\ \ \ \
(x_{\alpha_i}^-)^{\lambda(h_i)+1}p_{\lambda,r} = 0,
\end{equation}

\noindent for all $h\in\lie h$ and $i\in I$.
In particular, we have that if  $V \in \Ob\ghat$, then
$$\Hom_{\ghat}(P(\lambda,r),V) \cong \Hom_{\g}(V(\lambda),V[r]).$$

\item $P(\lambda,r)$ is the projective cover of its unique irreducible
quotient $V(\lambda,r)$ in $\ghat$.

\item Let $K(\lambda,r)$ be the kernel of the morphism $P(\lambda,r)\to
V(\lambda,r)$ which maps $p_{\lambda,r}\to v_{\lambda,r}$. Then
$$K(\lambda,r)=\univ(\lie a)(\lie a_+\otimes V(\lambda,r)),$$
and hence
$$\Hom_{\ghat}(K(\lambda,r), V(\mu,s))\ne 0 \implies \Hom_{\lie g}(\lie
a_{s-r}\otimes V(\lambda), V(\mu))\ne 0.$$
\end{enumerit}
\end{prop}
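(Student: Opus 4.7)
All three parts flow from the PBW decomposition $\univ(\lie{a}) \cong \univ(\lie{a}_+) \otimes \univ(\g)$ recalled above, together with the standard induction--restriction adjunction. I would carry out (i), (ii), (iii) in that order.

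For part (i), I would appeal to the universal property of $\univ(\lie{a}) \otimes_{\univ(\g)} (-)$: any $\g$-module map $\phi: V(\lambda) \to M[r]$ with $M \in \Ob\ghat$ extends uniquely to a morphism $\tilde\phi: P(\lambda,r) \to M$ in $\ghat$ sending $p_{\lambda,r} \to \phi(v_\lambda)$. Because $V(\lambda,r)$ is itself generated by $v_{\lambda,r}$ subject to the relations \eqref{defvlambda} (shifted into grade $r$), the element $p_{\lambda,r}$ generates $P(\lambda,r)$ over $\univ(\lie{a})$ and satisfies \eqref{defplambda}; conversely, any element of a module in $\ghat$ that is homogeneous of grade $r$ and satisfies \eqref{defplambda} determines such a $\g$-map, hence by the universal property a morphism from $P(\lambda,r)$. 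This gives both the presentation and the Hom-adjunction.

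For part (ii), projectivity is immediate from the adjunction in (i): the grade-$r$ functor $M \mapsto M[r]$ is exact on $\ghat$, and $\Hom_\g(V(\lambda), -)$ is exact on $\F(\g)$ by semisimplicity. Uniqueness of the simple quotient follows because, by \propref{P31}, every simple quotient of $P(\lambda,r)$ has the form $V(\mu,s)$, and a nonzero morphism to $V(\mu,s)$ corresponds via (i) to a nonzero $\g$-map $V(\lambda) \to V(\mu,s)[r]$, which forces $(\mu,s) = (\lambda,r)$. To conclude projective cover I would verify that $K(\lambda,r)$ is superfluous: since the quotient map $P(\lambda,r) \onto V(\lambda,r)$ is an isomorphism in grade $r$, one has $K(\lambda,r)[k] = 0$ for $k \le r$, so any $\lie{a}$-submodule $N$ satisfying $N + K(\lambda,r) = P(\lambda,r)$ must contain $P(\lambda,r)[r] \cong V(\lambda)$ in its entirety; but $V(\lambda)$ generates $P(\lambda,r)$ over $\univ(\lie{a})$, forcing $N = P(\lambda,r)$.

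For part (iii), the PBW identification lets me view $P(\lambda,r)$ as $\univ(\lie{a}_+) \otimes V(\lambda,r)$ as a $\Z$-graded vector space, with the canonical surjection $P(\lambda,r) \onto V(\lambda,r)$ becoming the augmentation killing $\univ(\lie{a}_+)_{>0}$. Since $\univ(\lie{a}_+)_{>0} = \univ(\lie{a}_+) \cdot \lie{a}_+$ as a left module over itself, one obtains $K(\lambda,r) = \univ(\lie{a}) \cdot (\lie{a}_+ \otimes V(\lambda,r))$ as required. For the final Hom assertion, the key observation is that $\lie{a}_+$ annihilates $V(\mu,s)$, so every morphism $\phi: K(\lambda,r) \to V(\mu,s)$ factors through the $\g$-module quotient $K(\lambda,r)/\lie{a}_+ K(\lambda,r)$. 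By the cotangent identification $\univ(\lie{a}_+)_{>0}/(\univ(\lie{a}_+)_{>0})^2 \cong \lie{a}_+$ (another application of PBW), this quotient is $\lie{a}_+ \otimes V(\lambda)$, whose grade-$s$ piece is precisely $\lie{a}_{s-r} \otimes V(\lambda)$. A nonzero $\phi$ therefore induces a nonzero $\g$-map $\lie{a}_{s-r} \otimes V(\lambda) \to V(\mu)$. The one step that is not purely formal is this cotangent-space identification, which I regard as the main technical input; everything else is bookkeeping with the grading and the induction--restriction adjunction.
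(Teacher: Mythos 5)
Your proposal is correct and follows essentially the same route as the paper's: PBW together with the induction--restriction adjunction gives (i) and projectivity in (ii), the projective-cover claim is deduced from the graded structure (the paper phrases it via the uniqueness of the maximal graded submodule arising from $\dim P(\lambda,r)_\lambda[r]=1$, while you check directly that $K(\lambda,r)$ is superfluous; the two are equivalent here), and (iii) rests on the PBW identification of $K(\lambda,r)$.

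One small imprecision in (iii): in the generality of this section $\lie{a}_+$ need not be abelian, so the cotangent space is $\univ(\lie{a}_+)_{>0}/(\univ(\lie{a}_+)_{>0})^2 \cong \lie{a}_+/[\lie{a}_+,\lie{a}_+]$, not $\lie{a}_+$ itself. This does not affect the conclusion, since all you need is the natural surjection $\lie{a}_+ \otimes V(\lambda) \onto K(\lambda,r)/\lie{a}_+K(\lambda,r)$, whose grade-$s$ component still gives a nonzero $\g$-map $\lie{a}_{s-r}\otimes V(\lambda) \to V(\mu)$. The paper sidesteps the cotangent computation altogether by just using that $K(\lambda,r)$ is generated over $\univ(\lie{a})$ by $\lie{a}_+ \otimes V(\lambda,r)$ and that $\univ(\lie{a}_+)_{>0}$ annihilates $V(\mu,s)$, which forces a nonzero $\varphi$ to be nonzero already on $\lie{a}_{s-r}\otimes V(\lambda,r)$. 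You may want to state only the surjectivity, both because it is what you actually use and because it is what is true in general.
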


\begin{proof}
It is clear that the element $p_{\lambda,r}$ generates $P(\lambda,r)$ as
a $\lie a$--module. Moreover, since $v_{\lambda,r}$ satisfies relations
$\eqref{defvlambda}$, it follows that $p_{\lambda,r}$ satisfies
\eqref{defplambda}. The fact that they are the defining relations is
immediate from the Poincare--Birkhoff--Witt Theorem. It is now  easily
seen that the map
$$\varphi\to\varphi|_{1\otimes V(\lambda,r)}$$

\noindent gives an isomorphism $\Hom_{\ghat}(P(\lambda,r),V) \cong
\Hom_{\lie g}(V(\lambda), V[r])$. Since $\univ{(\lie a_+)}[0]=\C$, we see
that $\dim P(\lambda,r)_\lambda[r]=1$, and hence $P(\lambda,r)$ has a
unique maximal graded submodule with corresponding quotient
$V(\lambda,r)$.
The fact that $P(\lambda,r)$ is projective is standard. Suppose that
there exists a projective module $P\in\Ob\ghat$ and a surjective morphism
$\psi: P\to V(\lambda,r)$; and choose $p\in P_\lambda[r]$ such that
$\psi(p)=v_{\lambda,r}$. The induced morphism $\widetilde\psi: P \to
P(\lambda,r)$ must satisfy $\widetilde\psi(p)=p_{\lambda,r}$ and, hence,
is surjective, proving that $P(\lambda,r)$ is the projective cover. This
also implies that $V(\lambda,r)$ is the quotient of $P(\lambda,r)$ by
imposing the  additional relation $\lie a_+ p_{\lambda,r}=0$.
This proves that $K(\lambda,r)$ is generated as a $\lie a_+$--module
by $\lie a_+\otimes V(\lambda,r)$. Hence if $\varphi \in
\Hom_{\ghat}(K(\lambda,r), V(\mu,s))\ne 0$, then $\varphi(\lie
a_+[s-r]\otimes V(\lambda,r))\ne 0$, and the proof of the Proposition is
complete.
\end{proof}

The following is obvious.

\begin{cor}
Suppose that $M\in\ghat$ is such that $M[s]=0$ for all $s \ll 0$. Then
$M$ is a quotient of a projective object $\bp(M)\in \ghat$.
\hfill\qedsymbol
\end{cor}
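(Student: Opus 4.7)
The plan is to build $\bp(M)$ as a direct sum of the projective covers $P(\lambda,r)$ from \propref{P41}, with multiplicities read off from the simple $\g$-module decomposition of each graded component of $M$. Concretely, fix $s_0\in\Z$ such that $M[s]=0$ for all $s<s_0$, and for each $r\geq s_0$ write the finite-dimensional semisimple $\g$-module $M[r]$ as
$$M[r]\cong\bigoplus_{\lambda\in P^+} V(\lambda)^{\oplus n_{\lambda,r}},\qquad n_{\lambda,r}:=[M:V(\lambda,r)]<\infty,$$
with $n_{\lambda,r}=0$ for all but finitely many $\lambda$. Then define
$$\bp(M):=\bigoplus_{(\lambda,r)\in\Lambda} P(\lambda,r)^{\oplus n_{\lambda,r}}.$$

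Next I would produce the surjection. By \propref{P41}(i), for each $(\lambda,r)$ we have $\Hom_{\ghat}(P(\lambda,r),M)\cong\Hom_{\lie g}(V(\lambda),M[r])$, so choosing a basis of the latter Hom space yields $n_{\lambda,r}$ morphisms $P(\lambda,r)\to M$ whose images together span the $V(\lambda)$-isotypic component of $M[r]$. Summing over $(\lambda,r)$ gives a morphism $\pi:\bp(M)\to M$ in $\ghat$ whose restriction to each graded piece $M[r]$ hits every isotypic summand. Since $\lie a$-submodules are in particular $\g$-submodules, $\pi$ is surjective.

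The one thing that needs checking is that $\bp(M)$ actually lies in $\ghat$, i.e., that it has finite-dimensional graded pieces and that these pieces vanish for sufficiently negative degree. For degree $k$, the summand $P(\lambda,r)^{\oplus n_{\lambda,r}}$ contributes to $\bp(M)[k]$ only when $s_0\leq r\leq k$ (as $P(\lambda,r)[k]=\univ(\lie a_+)[k-r]\otimes V(\lambda)$ by \corref{c33}), so finitely many $r$ contribute; for each such $r$ only finitely many $\lambda$ have $n_{\lambda,r}\neq 0$; and each $P(\lambda,r)[k]$ is finite-dimensional since $\univ(\lie a_+)[k-r]$ is. Hence $\bp(M)[k]$ is a finite sum of finite-dimensional spaces, and $\bp(M)[k]=0$ for $k<s_0$. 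Projectivity of $\bp(M)$ is immediate because an arbitrary direct sum of projectives in $\ghat$ is projective.

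The only potential subtlety, and the one step that truly uses the hypothesis $M[s]=0$ for $s\ll 0$, is the finite-dimensionality of $\bp(M)[k]$; without the lower bound on the grading of $M$, infinitely many shifts $r$ could contribute to a single graded piece. Everything else is a direct packaging of \propref{P41}, so the corollary follows.
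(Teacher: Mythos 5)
Your construction is correct and is, up to a choice of $\lie g$-module decomposition of $M$, exactly the object the paper has in mind: by PBW, $\bp(M)=\univ(\lie a)\otimes_{\univ(\lie g)}M\cong\univ(\lie a_+)\otimes M$ decomposes as $\bigoplus_{(\lambda,r)}P(\lambda,r)^{\oplus n_{\lambda,r}}$, and Corollary~\ref{c33} is precisely the finiteness check you redo by hand. So this is essentially the paper's argument, phrased via an explicit decomposition rather than the functorial description of the induction functor $\bp$.
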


\subsection{}

Motivated by the preceding Proposition, we define a partial order on
$\Lambda$ as follows.
Say that $(\mu,s)$ {\it covers} $(\lambda,r)$ if $s > r$ and $\mu -
\lambda\in \wt\lie{a}_{s-r}$. Define $\po$ to be the transitive and
reflexive closures of this relation. In particular, if $(\lambda, r) \po
(\mu,r)$, then $\lambda = \mu$. It is easily checked that $\po$ is a
partial order on $\Lambda$.

\begin{lem}\label{L26}
Let $(\lambda,r),(\mu,s)\in \Lambda$. Then,
$$\Ext^1_{\ghat}(V(\lambda,r), V(\mu,s))\ne 0\implies (\mu,s)\ \
{\rm{covers}}\ \ (\lambda,r).$$
\end{lem}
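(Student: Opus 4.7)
The plan is to exploit the projective cover $P(\lambda,r)$ of $V(\lambda,r)$ provided by Proposition~\ref{P41}. First I would apply $\Hom_{\ghat}(-,V(\mu,s))$ to the short exact sequence
\[ 0 \longrightarrow K(\lambda,r) \longrightarrow P(\lambda,r) \longrightarrow V(\lambda,r) \longrightarrow 0 \]
and use the projectivity of $P(\lambda,r)$ to realize $\Ext^1_{\ghat}(V(\lambda,r), V(\mu,s))$ as a quotient of $\Hom_{\ghat}(K(\lambda,r), V(\mu,s))$. The nonvanishing of the Ext thus forces $\Hom_{\ghat}(K(\lambda,r), V(\mu,s)) \ne 0$.

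The inequality $s>r$ in the definition of ``covers'' is then immediate from the grading: by Proposition~\ref{P41}(iii), $K(\lambda,r)=\univ(\lie a)(\lie a_+ \otimes V(\lambda,r))$ is concentrated in degrees strictly greater than $r$, while $V(\mu,s)$ sits in degree $s$; morphisms in $\ghat$ preserve grading, so a nonzero homomorphism requires $s>r$.

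To extract the weight condition $\mu-\lambda \in \wt \lie a_{s-r}$, I would feed the nonvanishing above into the second implication of Proposition~\ref{P41}(iii), obtaining $\Hom_{\g}(\lie a_{s-r}\otimes V(\lambda), V(\mu)) \ne 0$; by semisimplicity of $\F(\g)$, $V(\mu)$ embeds as a summand of $\lie a_{s-r}\otimes V(\lambda)$. The conclusion then reduces to the weight lemma that $V(\mu) \hookrightarrow M \otimes V(\lambda)$ for a finite-dimensional $\g$-module $M$ implies $\mu-\lambda \in \wt M$; this is the main (mildly nontrivial) step. To prove it, take a highest-weight vector $v$ of weight $\mu$ in $M \otimes V(\lambda)$, decompose $v=\sum_\eta v^{(\eta)}$ with $v^{(\eta)} \in M \otimes V(\lambda)_\eta$, and let $\eta^*$ be a maximal weight (in the dominance order on $\lie h^*$) with $v^{(\eta^*)} \ne 0$. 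For each simple root $\alpha_i$, the $(\eta^*+\alpha_i)$-component of $x^+_{\alpha_i}v=0$ receives contributions only from $v^{(\eta^*)}$ by maximality of $\eta^*$, and choosing a basis of $V(\lambda)_{\eta^*}$ compatible with $\ker x^+_{\alpha_i}$ forces $v^{(\eta^*)} \in M \otimes \ker\bigl(x^+_{\alpha_i}|_{V(\lambda)_{\eta^*}}\bigr)$. Intersecting over all simple $i$ places $v^{(\eta^*)}$ inside $M \otimes (V(\lambda)^{\lie n^+})_{\eta^*}$; since $V(\lambda)^{\lie n^+}$ is one-dimensional of weight $\lambda$, this forces $\eta^*=\lambda$, and the coefficient of the highest weight vector $v_\lambda$ in $v$ then supplies the desired nonzero element of $M_{\mu-\lambda}$.
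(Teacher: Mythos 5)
Your argument is correct and follows the same route as the paper: reduce via the short exact sequence $0\to K(\lambda,r)\to P(\lambda,r)\to V(\lambda,r)\to 0$ and projectivity of $P(\lambda,r)$ to the nonvanishing of $\Hom_{\ghat}(K(\lambda,r),V(\mu,s))$, then invoke Proposition~\ref{P41}(iii). The paper leaves the last step (that $\Hom_{\g}(\lie a_{s-r}\otimes V(\lambda),V(\mu))\ne 0$ forces $\mu-\lambda\in\wt\lie a_{s-r}$, and that degree considerations force $s>r$) unstated, whereas you spell out both the grading argument and the standard highest-weight-vector-in-a-tensor-product argument in full; this is a faithful expansion of the paper's two-line proof rather than a different approach.
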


\begin{proof}
Using Proposition \ref{P41}(ii), we see that
$$\Ext^1_{\ghat}(V(\lambda,r), V(\mu,s))\cong\Hom_{\ghat}(K(\lambda,r),
V(\mu,s)).$$
The Lemma follows from \propref{P41}(iii).
\end{proof}

\subsection{}

We can now produce a projective resolution of the simple objects of
$\ghat$. The resolution is not minimal, and, in fact, it is unclear how to
produce a minimal resolution. However, as we shall see, it is adequate to
compute extensions between the simple objects.

For $(\lambda,r) \in \Lambda$ and $j \in \Z_+$, define
$$P_j(\lambda,r) := \bp(\wedge^j(\lie{a}_+)\otimes V(\lambda,r)) =
\univ(\lie{a}_+)\otimes \wedge^j(\lie{a}_+)\otimes V(\lambda,r) \in
\ghat.$$

\noindent In particular, notice that
$$P_j(\lambda,r)[k] = 0,\ \ k < r+j,\qquad \ P_0(\lambda,r) =
V(\lambda,r).$$

\noindent For $j\ge 0$, define linear maps $d_j: P_j(\lambda,r) \to
P_{j-1}(\lambda,r)$, (where we understand that $P_{-1}(\lambda,r)=
V(\lambda,r)$) by
$$d_0: P(\lambda,r) \to V(\lambda,r),\ \ d_0(u \otimes v) = u.v,$$

\noindent for $u \in \univ(\lie{a}_+)$ and $v \in V(\lambda,r)$ and
$$d_j = D \otimes \id_{V(\lambda,r)},\ \ j>0,$$

\noindent where $D$ is the Koszul differential on the Chevalley-Eilenberg
complex \cite{ChE} for $\lie{a}_+$.

\begin{prop}\label{P44}
\begin{enumerit}
\item If $j>0$ and $[P_j(\lambda,r):V(\mu,s)] \neq 0$, then $(\lambda,r)
\prec (\mu,s)$.
\item The following is a projective resolution of $V(\lambda,r)$ in
$\ghat$:
$$\dots \stackrel{d_3}{\longrightarrow} P_2(\lambda,r)
\stackrel{d_2}{\longrightarrow} P_1(\lambda,r)
\stackrel{d_1}{\longrightarrow} P(\lambda,r)
\stackrel{d_0}{\longrightarrow} V(\lambda,r) \longrightarrow 0.$$
\end{enumerit}
\end{prop}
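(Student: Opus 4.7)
My plan is to treat the two parts separately.

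For (i), I would use Proposition~\ref{P32} together with the induced grading on $\wedge^j(\lie a_+)$ to decompose each graded piece $P_j(\lambda,r)[s]$ as a $\g$-module into a direct sum of tensor products involving symmetric and exterior powers of the $\lie a_i$'s, tensored with $V(\lambda)$, whose total internal grading equals $s-r$. Since $j>0$, the wedge factor $\wedge^j(\lie a_+)$ forces $s\geq r+j>r$. If $V(\mu)$ occurs as a $\g$-summand of such a piece, then embedding each $\Sym^{r_i}(\lie a_i)$ and $\wedge^{j_i}(\lie a_i)$ $\g$-equivariantly into an iterated tensor product of the $\lie a_i$'s, one obtains an embedding $V(\mu)\hookrightarrow \lie a_{d_1}\otimes\cdots\otimes\lie a_{d_N}\otimes V(\lambda)$ with each $d_i>0$ and $\sum d_i=s-r$. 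I would then peel off the tensor factors one at a time: the existence of a $\g$-summand $V(\mu)\hookrightarrow\lie a_{d_N}\otimes W$ forces a $\g$-summand $V(\mu')\subset W$ with $V(\mu)\hookrightarrow\lie a_{d_N}\otimes V(\mu')$, and the representation-theoretic input underlying \propref{P41}(iii) and \lemref{L26} then gives $\mu-\mu'\in\wt\lie a_{d_N}$. Iterating produces a chain of coverings $(\lambda,r)=(\mu_0,r_0)\prec(\mu_1,r_1)\prec\cdots\prec(\mu_N,r_N)=(\mu,s)$, so $(\lambda,r)\prec(\mu,s)$.

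For (ii), projectivity of each $P_j(\lambda,r)=\bp(\wedge^j(\lie a_+)\otimes V(\lambda,r))$ is immediate from the Corollary preceding the statement. The differentials $d_j=D\otimes\id$ inherit $d_j d_{j+1}=0$ from the Chevalley--Eilenberg differential $D$ on $\univ(\lie a_+)\otimes\wedge^\bullet(\lie a_+)$, and a direct check shows that they are $\g$- and $\lie a_+$-equivariant and degree-preserving. For exactness, I would identify our complex with the classical Chevalley--Eilenberg/Koszul resolution via the PBW isomorphism $\univ(\lie a)\otimes_{\univ(\g)}W\cong\univ(\lie a_+)\otimes W$, which holds for any $\g$-module $W$: under this identification, $P_\bullet(\lambda,r)$ becomes the free $\univ(\lie a_+)$-module complex $\univ(\lie a_+)\otimes\wedge^\bullet(\lie a_+)\otimes V(\lambda)$, which resolves $V(\lambda)$ (viewed as a $\univ(\lie a_+)$-module with trivial action) since the standard Chevalley--Eilenberg resolution of $\C$ by $\univ(\lie a_+)\otimes\wedge^\bullet(\lie a_+)$ remains exact upon tensoring over $\C$ with the vector space $V(\lambda)$. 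The augmentation $d_0$ is visibly the canonical surjection to $V(\lambda,r)$.

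The main obstacle is the inductive weight-tracking in part~(i): one must verify that at each stage the chosen summand $V(\mu')$ yields a dominant weight and that the single-step implication ``$V(\mu)\hookrightarrow\lie a_d\otimes V(\mu')$ implies $\mu-\mu'\in\wt\lie a_d$'' applies. Dominance of $\mu'$ is automatic from its definition as the highest weight of an irreducible $\g$-summand, and the single-step implication is precisely the representation-theoretic input already invoked in \lemref{L26}, so the induction should close without further surprises.
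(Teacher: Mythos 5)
Your proposal is correct and follows the paper's intended approach. For part~(ii) your argument is essentially identical to the paper's: projectivity from the corollary to Proposition~\ref{P41}, $\g$- and $\univ(\lie a_+)$-equivariance of $d_j = D\otimes\id$, and exactness inherited from the Chevalley--Eilenberg resolution of $\C$ after tensoring with $V(\lambda)$ over~$\C$. For part~(i) the paper is terse---it notes only that $[P_j(\lambda,r):V(\lambda,r)]=0$ when $j>0$ and refers to ``an argument similar to the one in Proposition~\ref{P41}''---so your explicit peeling argument is a genuine filling-in rather than a deviation: you use Proposition~\ref{P32} to write $P_j(\lambda,r)[s]$ as a sum of tensor products of $\Sym$'s and $\wedge$'s of the $\lie a_i$'s with $V(\lambda)$, embed these into iterated tensor products $\lie a_{d_1}\otimes\cdots\otimes\lie a_{d_N}\otimes V(\lambda)$ with all $d_i>0$, and then repeatedly apply the fact (Lemma~\ref{L73}(ii), implicitly used in Lemma~\ref{L26}) that $\Hom_{\g}(\lie a_d\otimes V(\mu'),V(\mu))\neq 0$ forces $\mu-\mu'\in\wt\lie a_d$, producing the required chain of coverings. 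One cosmetic point: your factor indices are reordered in the peeling step (you strip $\lie a_{d_N}$ from the opposite end), but since $\F(\g)$ is a semisimple symmetric tensor category this is harmless; everything you need closes as claimed.
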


\begin{proof}
If $j > 0$, then $P_j(\lambda,r)[r] = 0$ and so $[P_j(\lambda,r) :
V(\lambda,r)] = 0$. The rest of the proof follows by an argument similar
to the one in \propref{P41}.
Note that $P_j(\lambda,r)$ is projective in $\ghat$ for all $j\in \Z_+$
by Corollary \ref{P41}. It is straightforward to check that $d_j$ is an
$\lie{a}$-module map, and hence a morphism in $\ghat$ for all $j \in
\Z_+$. Finally, since $d_j = D \otimes \id_{V(\lambda,r)}$ for all $j$,
it follows that the sequence is exact in $\ghat$.
\end{proof}

\subsection{}

For  $s \in \Z$, let $\ghat_{\leq s}$ be the full subcategory of
$\ghat$ satisfying
$$V\in\Ob\ghat\implies \  V[k] = 0,\ \ k>s.$$

\noindent The subcategory $\curs{G}_{\le s}$ is defined similarly. Notice
that
$$V\in\Ob\ghat, V[k]=0, k\ll 0\implies V_{\le r}\in\curs{G}_{\le r},\ \
r\in\ \Z.$$

For $V \in \Ob\ghat$, define
$$V_{>s}= \bigoplus_{k>s} V[k],\qquad V_{\leq s} = V/V_{>s},$$

\noindent and note that $V_{\leq s}\in\Ob\ghat_{\leq s}$.
If $f \in \Hom_{\ghat}(V,W)$ and $s\in\Z$, there is a natural morphism
$$f_{\leq s} \in \Hom_{\ghat_{\leq s}}(V_{\leq s}, W_{\leq s}), \ \
v+V_{>s} \stackrel{f_{\leq s}}{\longmapsto} f(v) + W_{>s}.$$

\noindent The assignment $V \mapsto V_{\leq s}$ and $f \mapsto f_{\leq
s}$ defines a full, exact, and essentially surjective functor $: \ghat
\to \ghat_{\leq s}$ for each $s\in\Z_+$.
The following is immediate from Propositions \ref{P32} and \ref{P41}.

\begin{lem}\label{projles}
For $(\lambda,r)\in\Lambda$ and $s\in\Z$, we have $P(\lambda,r)_{\le
s}=0$ if $s < r$. If $s \ge r$, then $P(\lambda,r)_{\le s}$ is the
projective cover in $\curs{G}_{\le s}$ of $V(\lambda,r)$  and $P(\lambda,
r)[s]\ne 0$ if $\lie a_+\ne 0$.\hfill\qedsymbol
\end{lem}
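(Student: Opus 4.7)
The plan is to reduce the lemma to the defining Hom–representation property of $P(\lambda,r)$ together with the vanishing description of its graded pieces. Recall from \propref{P41} and the PBW isomorphism that $P(\lambda,r)[k]\cong\univ(\lie a_+)[k-r]\otimes V(\lambda)$ as a $\g$-module, so the grading is bounded below by $r$. The vanishing claim $P(\lambda,r)_{\le s}=0$ for $s<r$ is then immediate.

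For the substantive case $s\ge r$, I would first verify that $P(\lambda,r)_{\le s}$ is an object of $\curs G_{\le s}$. By \propref{P32}, each $\univ(\lie a_+)[k]$ is a finite sum of finite-dimensional symmetric powers of the $\lie a_j$'s, hence finite-dimensional; so $\bigoplus_{r\le k\le s}P(\lambda,r)[k]$ is finite-dimensional.

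The heart of the proof is projectivity, and this I would get by transporting the representing property of \propref{P41}(i) through the truncation functor. Given any $V\in\Ob\curs G_{\le s}$, regard it as an object of $\ghat$ (with $V[k]=0$ for $k>s$). Any morphism $P(\lambda,r)\to V$ in $\ghat$ kills $P(\lambda,r)_{>s}$ for degree reasons, so factors uniquely through $P(\lambda,r)_{\le s}$, yielding a natural isomorphism
$$\Hom_{\curs G_{\le s}}(P(\lambda,r)_{\le s},V)\;\cong\;\Hom_{\ghat}(P(\lambda,r),V)\;\cong\;\Hom_{\g}(V(\lambda),V[r]).$$
Since $\F(\g)$ is semisimple, the rightmost functor is exact on $\curs G_{\le s}$, giving projectivity. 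The projective cover statement then follows because any graded submodule $N\subset P(\lambda,r)$ with quotient $V(\lambda,r)$ must already contain $P(\lambda,r)_{>r}$ (and in particular $P(\lambda,r)_{>s}$); passing to the truncation, the unique maximal graded submodule of $P(\lambda,r)$ descends to the unique maximal graded submodule of $P(\lambda,r)_{\le s}$, with quotient still $V(\lambda,r)$.

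The one subtle point, which I regard as the main obstacle, is the final nonvanishing assertion $P(\lambda,r)[s]\ne 0$ under the hypothesis $\lie a_+\ne 0$. Here I would invoke \propref{P32} once more: in the setting of interest the non-negative grading of $\lie a$ concentrates $\lie a_+$ in degrees where symmetric powers fill every degree of $\univ(\lie a_+)$, so $\univ(\lie a_+)[s-r]\ne 0$ and hence $P(\lambda,r)[s]\supset\univ(\lie a_+)[s-r]\otimes v_\lambda\ne 0$. This is where one must check that the particular grading assumption on $\lie a_+$ (which in the applications of this paper places $V$ entirely in grade one) ensures nontriviality of every graded piece; otherwise the claim would need to be weakened to infinitely many $s$.
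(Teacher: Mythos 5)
Your argument matches what the paper intends: the lemma is stated as ``immediate from Propositions \ref{P32} and \ref{P41},'' and your proof is exactly the natural unwinding of those two results — the vanishing below degree $r$ and the graded pieces come from the PBW description, projectivity comes from transporting the representing property $\Hom_{\ghat}(P(\lambda,r),-)\cong\Hom_{\g}(V(\lambda),(-)[r])$ through the truncation functor, and the projective-cover claim follows from the one-dimensionality of $P(\lambda,r)_{\le s}[r]_\lambda$. Your reservation about the final nonvanishing claim is in fact justified, not just a caution: at the level of generality of Section 2, where $\lie a$ is an arbitrary $\Z_+$-graded Lie algebra with $\lie a_0=\lie g$, the statement $P(\lambda,r)[s]\ne 0$ for \emph{every} $s\ge r$ fails when $\lie a_+\ne 0$ but $\lie a_1=0$ — for instance if $\lie a_+=\lie a_2$ then $\univ(\lie a_+)[1]=0$ and $P(\lambda,r)[r+1]=0$. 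The claim as written really needs $\lie a_1\ne 0$ (which does hold from Section 4 on, where $\lie a_+ = V$ sits in degree one); alternatively one can weaken it to ``$P(\lambda,r)[s]\ne 0$ for infinitely many $s\ge r$,'' which suffices for the only downstream use (ruling out finite-dimensional projectives in $\curs G$): pick $j$ with $\lie a_j\ne 0$, so $\Sym^m(\lie a_j)\subset\univ(\lie a_+)[jm]$ is nonzero for all $m$, and take $s=r+jm$ exceeding the top degree of the hypothetical projective $P$.
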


\subsection{}

We end this section with the following result, which shows that it is
necessary to work in $\ghat$ rather than $\curs{G}$.

\begin{lem}
$\curs{G}$ has  projective objects if and only if $\lie{a}_+ = 0$.
\end{lem}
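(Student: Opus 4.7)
The plan is as follows. For the easy direction, if $\lie a_+ = 0$ then $\lie a = \g$, so $\curs{G}$ is simply the category of finite-dimensional $\Z$-graded $\g$-modules; this category is semisimple (each graded piece decomposes into simples in $\F(\g)$), so every object is projective --- in particular, the simples $V(\lambda,r)$.

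For the converse, assume $\lie a_+ \ne 0$ and suppose toward a contradiction that $P \in \Ob\curs{G}$ is a nonzero projective. Being finite-dimensional and nonzero, $P$ admits a simple quotient $\pi : P \to V(\lambda,r)$ for some $(\lambda,r) \in \Lambda$. For each $s \ge r$, Lemma \ref{projles} produces a finite-dimensional object $P(\lambda,r)_{\le s} \in \Ob\curs{G}_{\le s} \subseteq \Ob\curs{G}$ together with a surjection $q_s : P(\lambda,r)_{\le s} \to V(\lambda,r)$. Projectivity of $P$ in $\curs{G}$ lifts $\pi$ through $q_s$ to a morphism $g_s : P \to P(\lambda,r)_{\le s}$ with $q_s \circ g_s = \pi$.

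The heart of the argument is to show that $g_s$ is surjective. Setting $M = \im g_s$, the composition $M \hookrightarrow P(\lambda,r)_{\le s} \twoheadrightarrow V(\lambda,r)$ is onto, and since $V(\lambda,r)$ is concentrated in grade $r$ where it equals $V(\lambda)$ while $\ker q_s$ lives in strictly higher grades, the graded $\g$-submodule $M[r] \subseteq P(\lambda,r)_{\le s}[r] = V(\lambda)$ must surject onto $V(\lambda)$. Simplicity of $V(\lambda)$ forces $M[r] = V(\lambda)$, so $M$ contains the cyclic generator $p_{\lambda,r}$ of $P(\lambda,r)_{\le s}$; hence $M = P(\lambda,r)_{\le s}$.

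To conclude, the second assertion of Lemma \ref{projles} (which crucially uses $\lie a_+ \ne 0$) gives $P(\lambda,r)[k] \ne 0$ for every $k \ge r$, so $\dim P(\lambda,r)_{\le s} \ge s-r+1$, which is unbounded as $s \to \infty$. This contradicts the existence of surjections $P \twoheadrightarrow P(\lambda,r)_{\le s}$ for all $s$, since $\dim P < \infty$. The only step requiring any thought is the surjectivity of $g_s$, and this reduces to the familiar fact that a graded lift of a highest-weight vector lies in the top grade of the projective cover and generates it.
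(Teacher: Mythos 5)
Your proposal is correct and follows essentially the same strategy as the paper: reduce to the surjection $P \twoheadrightarrow P(\lambda,r)_{\le s}$ obtained from projectivity together with the projective cover statement of Lemma~\ref{projles}, and then contradict finite-dimensionality of $P$ using $P(\lambda,r)[s] \ne 0$ for $s \ge r$ when $\lie{a}_+ \ne 0$. The paper derives the contradiction by choosing $s$ just past the top grade of $P$, while you let $s \to \infty$ and count dimensions, but these are the same idea; your explicit verification that $g_s$ is onto (via the generator $p_{\lambda,r}$) is a fine expansion of the projective-cover minimality the paper simply cites.
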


\begin{proof}
First suppose that $\lie{a}_+ = 0$ and $\lie{a} = \lie{g}$ is semisimple.
Then $\curs{G}$ is a semisimple category and all $\Ext^1$-groups vanish.
In particular, every object in $\curs{G}$ is projective.

Suppose that  $\lie{a}_+ \neq 0$ and let  $P \in \curs{G}$ be a non--zero
projective object in $\curs{ G}$. Since $P$ is finite--dimensional, we
may assume without loss of generality that $P$ is indecomposable and maps
onto $V(\lambda,r)$ for some $(\lambda,r)\in\Lambda$. For any $s\in \Z$
such that $P\in\curs{G}_{\le s}$, we see from Lemma \ref{projles} that
there exists a surjective map from $P\to P(\lambda,r)_{\le s}$. Suppose
now that $s$ is such that $P[s-1]\ne 0$ but $P[s]=0$. Then we would have
that $P(\lambda,r)_{\le s}[s]=0$, which contradicts Lemma \ref{projles}.
\end{proof}

\section{Truncated categories}

In this section, we study certain Serre subcategories of $\ghat$, and
prove that they are directed categories with finitely many simple
objects.

\subsection{}

Given $\Gamma \subset \Lambda$, let $\ghat[\Gamma]$ be the full
subcategory of $\ghat$ consisting of all $M$ such that
$$M \in \Ob\ghat,\ \ [M:V(\lambda,r)] > 0 \implies (\lambda,r)\in
\Gamma.$$

\noindent The subcategories $\curs{G}[\Gamma]$ are defined in the obvious
way. Observe that if $(\lambda,r)\in\Gamma$, then $V(\lambda,r) \in
\ghat[\Gamma]$, and we have the following trivial result.

\begin{lem}
The isomorphism classes of simple objects of $\ghat[\Gamma]$ are indexed
by $\Gamma$.\hfill\qedsymbol
\end{lem}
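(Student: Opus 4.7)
The plan is to establish a bijection between $\Gamma$ and the isomorphism classes of simples of $\ghat[\Gamma]$, in two directions.

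For the easy direction, fix $(\lambda,r) \in \Gamma$. By \propref{P31}, $V(\lambda,r)$ is simple in $\ghat$, and its only nonzero value of $[\,\cdot\,:V(\mu,s)]$ occurs at $(\mu,s)=(\lambda,r) \in \Gamma$, so $V(\lambda,r) \in \Ob\ghat[\Gamma]$. Since $\ghat[\Gamma]$ is a full subcategory of $\ghat$, simplicity is inherited. Distinct elements of $\Gamma$ give nonisomorphic objects, again by \propref{P31}.

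For the converse, let $M$ be simple in $\ghat[\Gamma]$. The key step is to show $M$ is already simple in $\ghat$. Suppose $N \subset M$ is a proper nonzero subobject in $\ghat$. Then for each $s \in \Z$, $N[s] \subseteq M[s]$ as $\g$-submodules, and since every object of $\F(\g)$ is completely reducible we obtain $[N:V(\mu,s)] \le [M:V(\mu,s)]$ for all $(\mu,s) \in \Lambda$; similarly for $M/N$. Hence both $N$ and $M/N$ lie in $\ghat[\Gamma]$, contradicting simplicity of $M$ there. Thus $M$ is simple in $\ghat$, so by \propref{P31} we have $M \cong V(\lambda,r)$ for some $(\lambda,r) \in \Lambda$, and the condition $M \in \Ob\ghat[\Gamma]$ forces $(\lambda,r) \in \Gamma$.

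There is no real obstacle: the only mild point is that the defining condition on $\ghat[\Gamma]$ is preserved under subobjects and quotients in $\ghat$, which follows from the semisimplicity of the graded pieces as $\g$-modules. This closure is exactly what promotes a simple object of $\ghat[\Gamma]$ to a simple object of $\ghat$, after which \propref{P31} finishes the argument.
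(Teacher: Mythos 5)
Your argument is correct and fills in exactly what the paper leaves unsaid: the lemma is stated without proof (the $\qedsymbol$ appears immediately after the statement, and the text calls it ``trivial''). The substance, as you identify, is that $\ghat[\Gamma]$ is closed under subobjects and quotients inside $\ghat$ (it is a Serre subcategory, as the section's opening sentence flags), so a simple object of $\ghat[\Gamma]$ is already simple in $\ghat$ and Proposition~\ref{P31} then forces it to be some $V(\lambda,r)$ with $(\lambda,r)\in\Gamma$; your use of semisimplicity of $\F(\g)$ to get $[N:V(\mu,s)]\le[M:V(\mu,s)]$ is the right justification for that closure.
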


\subsection{}

For $V \in \ghat$, set
\begin{gather*} V_{\Gamma}^+ := \{v \in V[r]_{\lambda} : (\lambda,r) \in
\Gamma, \ \lie{n}^+v = 0\},\\
V_{\Gamma} := \univ(\lie{g})V_{\Gamma}^+ \qquad V^{\Gamma} :=
V/V_{\Lambda\setminus\Gamma}.
\end{gather*}

It is clear that $V_\Gamma$ and $V^\Gamma$ are $\Z$--graded $\lie
g$--modules, and that they are finite--dimensional if $\Gamma$ is a
finite set.
If $f\in \Hom_{\ghat}(V,W)$ then $f(V_{\Gamma}^+) \subset W_{\Gamma}^+$,
and hence the restriction $f_\Gamma$ of $f$ to $V_\Gamma$ is an element
of $\Hom_{\lie g}(V_\Gamma, W_\Gamma)$.
Moreover, since   $f(V_{\Lambda\setminus\Gamma}) \subset
W_{\Lambda\setminus\Gamma}$, we also have a natural induced  map of $\lie
g$--modules $f^{\Gamma}:V^{\Gamma} \to W^{\Gamma}$.
It is not true in general that $V_\Gamma $ and $V^\Gamma$ are in
$\ghat[\Gamma]$. However, in the case when $V_\Gamma$ and $W_\Gamma$
(resp. $V^{\Gamma}$ and $W^{\Gamma}$) are $\lie{a}$--submodules,
$f_\Gamma$ (resp. $f^{\Gamma}$) is a morphism in $\ghat[\Gamma]$.

The following is the first step in determining a sufficient condition for
this to be true. Set
$$\Lambda(V)=\{(\lambda,r)\in\Lambda : V_\lambda[r] \ne 0\}.$$

\begin{prop}\label{P52}
Suppose $V \in \ghat$ and $\Gamma \subset \Lambda$. If $V_{\Gamma}$ is
not an $\lie{a}$-submodule of $V$, then there exist $(\nu,s) \in
\Lambda(V) \setminus \Gamma$ and $(\lambda,r) \in \Gamma \cap \Lambda(V)$
such that $(\nu,s)$ covers $(\lambda,r)$.
\end{prop}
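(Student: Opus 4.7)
The plan is to exhibit, under the hypothesis that $V_\Gamma$ is not $\lie{a}$-stable, a nonzero $\lie{n}^+$-killed weight vector in the quotient $V/V_\Gamma$ whose weight $\nu$ and grade $s$ supply the desired pair. Since $V_\Gamma$ is a direct sum of $\lie{g}$-isotypic components of $V$ indexed by $\Gamma$, a highest-weight vector of $(V/V_\Gamma)[s]$ automatically has weight $\nu$ with $(\nu,s) \notin \Gamma$, while $(\nu,s) \in \Lambda(V)$ is immediate from its nonvanishing.

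First I would observe that, because $V_\Gamma = \univ(\lie{g})V_\Gamma^+$ is already $\lie{g}$-stable, failure of $\lie{a}$-stability is equivalent to $\lie{a}_+ V_\Gamma^+ \not\subset V_\Gamma$; the nontrivial direction, namely $\lie{a}_+ V_\Gamma^+ \subset V_\Gamma \Rightarrow \lie{a}_+ V_\Gamma \subset V_\Gamma$, follows by induction on the length of monomials in $\univ(\lie{g})$ using $xy = yx + [x,y]$ for $x \in \lie{a}_+$, $y \in \lie{g}$, together with the inclusion $[\lie{g}, \lie{a}_+] \subset \lie{a}_+$. Thus we obtain homogeneous weight vectors $v \in V_\Gamma^+$ of weight $\lambda$ in grade $r$, so that $(\lambda,r) \in \Gamma \cap \Lambda(V)$, and $x \in (\lie{a}_m)_\beta$ for some $m > 0$, such that $xv \notin V_\Gamma$.

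The crux of the argument is to sharpen this choice. Set $s = r + m$, and among all admissible pairs $(v,x)$ whose product lies in $V[s] \setminus V_\Gamma$, choose one for which the weight $\lambda + \beta$ of $xv$ is maximal in the dominance order (the candidate weights form a finite subset of $\wt(V[s])$, so a maximum exists). For this choice, the class $xv + V_\Gamma$ will be killed by $\lie{n}^+$ in $V/V_\Gamma$: indeed, $x_\alpha^+ \cdot xv = [x_\alpha^+, x]v$ for every $\alpha \in R^+$, since $x_\alpha^+ v = 0$, and $[x_\alpha^+, x] \in (\lie{a}_m)_{\beta+\alpha}$; if $[x_\alpha^+, x] v$ were not in $V_\Gamma$ then the pair $(v, [x_\alpha^+, x])$ would produce an element of weight $\lambda + \beta + \alpha$ in the same grade $s$, strictly above $\lambda + \beta$ in the dominance order, contradicting maximality.

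Because $(V/V_\Gamma)[s]$ is a finite-dimensional semisimple $\lie{g}$-module, the existence of a nonzero $\lie{n}^+$-killed weight vector of weight $\lambda + \beta$ in it forces $V(\lambda+\beta)$ to occur as an isotypic summand; hence $(\lambda+\beta, s) \in \Lambda(V) \setminus \Gamma$, and this pair covers $(\lambda,r) \in \Gamma \cap \Lambda(V)$ since $s - r = m > 0$ and $(\lambda+\beta) - \lambda = \beta \in \wt(\lie{a}_m)$. The main obstacle is precisely the dominance-maximality trick: a naive attempt that takes $(\nu,s) = (\lambda+\beta, s)$ for an arbitrary pair $(v,x)$ can fail, because the projection of $xv$ onto the $V(\lambda+\beta)$-isotypic component of $V[s]$ may vanish, and the other isotypic components where $xv$ has mass need not cover $(\lambda,r)$ in the strict sense required; passing to the quotient and enforcing maximality is what genuinely produces a highest-weight vector witnessing a pair outside $\Gamma$.
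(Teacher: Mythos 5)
Your proof is correct, and it takes a noticeably different (though kindred) route from the paper's. Both proofs start by reducing to a weight vector $v\in V_\Gamma^+\cap V[r]_\lambda$ and a weight vector $x\in(\lie a_k)_\beta$ with $xv\notin V_\Gamma$; the divergence is in how one extracts a covering pair. The paper fixes a $\lie g$-module complement $U$ of $V_\Gamma$, projects $xv$ onto some isotypic summand $V(\nu,r+k)$ of $U[r+k]$, and observes that the resulting $\lie g$-map $\xi:\lie a_k\otimes V(\lambda,r)\to V(\nu,r+k)$ carries the $\lie b^+$-submodule $\lie a_k\otimes\C v_\lambda$ to a nonzero $\lie b^+$-submodule of $V(\nu,r+k)$, which must contain the maximal vector $v_\nu$; this forces $\nu-\lambda\in\wt(\lie a_k)$. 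You instead maximize the weight $\lambda+\beta$ of $xv$ in the dominance order over all admissible pairs landing in grade $s$, and check by hand that $x_\alpha^+(xv)=[x_\alpha^+,x]v$ must lie in $V_\Gamma$, so $\overline{xv}$ is already a highest-weight vector in $(V/V_\Gamma)[s]$; this is a more elementary, self-contained way of locating the maximal vector, avoiding the detour through a chosen complement and the $\lie b^+$-submodule lemma. The two approaches buy the same conclusion; yours trades the structural fact about finite-dimensional $\lie b^+$-submodules of irreducibles for an explicit maximality argument. One small terminological slip: you write ``a maximum exists,'' but since dominance is only a partial order on $\wt(V[s])$ you should say ``a maximal element exists''; the argument only needs maximality (the new weight $\lambda+\beta+\alpha$ is comparable to and strictly above $\lambda+\beta$), so this does not affect correctness.
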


\begin{proof}
Since $V_\Gamma$ is $\Z$--graded and generated as a $\lie g$--module by
$V_\Gamma^+$, we may assume without loss of generality that there exists
$ a\in\lie a_k$ and  $v \in V_{\Gamma}^+\cap V[r]_\lambda$ with $a.v
\not\in V_{\Gamma}$ for some $k\in\Z_+$, $r\in\Z$, $\lambda\in P^+$. Let
$U$ be a  $\g$-module complement of $V_{\Gamma}$ in $V$. Then, the
projection of $av$ onto $U$ is non--zero and so there exists $\nu \in
P^+$ such that the composition of $\lie g$--module maps,
$$\lie{a}_k \otimes V(\lambda,r) \to V \onto U \onto U[r+k] \onto
V(\nu,r+k)$$

\noindent is non--zero. Call this nonzero composite map $\xi$. Now
$\lie{a}_k.V(\lambda,r) \neq 0$, so one can show that no nonzero maximal
vector $v_\lambda \in V(\lambda,r)_\lambda$ (i.e., a weight vector killed
by $\lie{n}^+$) is killed by all of $\lie{a}_k$. Since $\lie{a}_k\otimes
\C v_{\lambda}$ is a $\univ(\lie{b}^+)$-submodule of $\lie{a}_k\otimes
V(\lambda,r)$, $\xi(\lie{a}_k \otimes \C v_\lambda)$ is a nonzero
$\univ(\lie{b}^+)$-submodule of $V(\nu,r+k)$. Since $\xi(\lie{a}_k\otimes
\C v_{\lambda})$ is finite-dimensional, it must contain a maximal weight
vector in $V(\nu,r+k)$. In particular, $v_{\nu} \in \xi(\lie{a}_k\otimes
\C v_\lambda)$ where $\C v_{\nu} = V(\nu,r+k)_{\nu}$. Hence $v_\nu \in
\lie{a}_k . v_\lambda \subset V[r+k]$, so $\nu - \lambda \in
\wt(\lie{a}_k)$, and we conclude that $(\nu,r+k)$ covers $(\lambda,r)$.
\end{proof}

\subsection{}

A subset $\Gamma$ of $\Lambda$ is said to be {\it interval-closed} if
$$(\lambda,r)\po (\nu,p)\po (\mu,s),\ \ (\lambda,r), (\mu,s) \in \Gamma
\implies (\nu,p)\in\Gamma.$$

\begin{prop}\label{P53}
Suppose $\Gamma$ is a finite interval-closed subset of $\Lambda$. Let
$V\in\Ob\ghat$.
\begin{enumerit}
\item Assume that for any $(\lambda,r)\in \Lambda(V)\setminus\Gamma$ there exists $(\mu,s) \in \Gamma$ with $(\lambda,r) \prec (\mu,s)$. Then
$V_\Gamma\in\Ob\ghat[\Gamma]$. Furthermore, if $U$ is a submodule of $V$,
then
$$ U_\Gamma,(V/U)_\Gamma\in\Ob\ghat[\Gamma],\ \ (V/U)_\Gamma\cong
V_\Gamma/U_\Gamma.$$

\item  Assume that for any $(\lambda,r)\in \Lambda(V)\setminus\Gamma$
there exists $(\mu,s) \in \Gamma$ with $(\mu,s) \po (\lambda,r)$. Then
$V^\Gamma\in\Ob\ghat[\Gamma]$. Furthermore, if $U$ is a submodule of $V$,
then
$$ U^{\Gamma}, (V/U)^{\Gamma}\in\Ob\ghat[\Gamma],\ \ (V/U)^\Gamma\cong
V^\Gamma/U^\Gamma.$$
\end{enumerit}
\end{prop}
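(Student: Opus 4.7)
The plan is to prove (i) and (ii) in parallel, using \propref{P52} together with the interval-closedness of $\Gamma$ to show that the relevant subspaces are $\lie a$-stable, and exploiting the $\g$-semisimplicity of each finite-dimensional graded piece $V[r] \in \Ob\F(\g)$ to carry out the remaining bookkeeping grade by grade.

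The first and main step is to show that $V_\Gamma$ (resp.\ $V_{\Lambda \setminus \Gamma}$) is an $\lie a$-submodule of $V$. For (i), suppose not; by \propref{P52} there exist $(\lambda, r) \in \Gamma \cap \Lambda(V)$ and $(\nu, s) \in \Lambda(V) \setminus \Gamma$ with $(\nu, s)$ covering $(\lambda, r)$. The hypothesis of (i) applied to $(\nu, s)$ produces $(\mu, s') \in \Gamma$ with $(\nu, s) \prec (\mu, s')$, so $(\lambda, r) \prec (\nu, s) \prec (\mu, s')$ is a chain with both endpoints in $\Gamma$; interval-closedness then forces $(\nu, s) \in \Gamma$, contradicting its choice. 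For (ii), applying \propref{P52} with $\Lambda \setminus \Gamma$ in place of $\Gamma$ yields a cover $(\nu, s) \in \Gamma$ of some $(\lambda, r) \in \Lambda(V) \setminus \Gamma$; the hypothesis of (ii) now furnishes $(\mu, s') \in \Gamma$ with $(\mu, s') \po (\lambda, r)$, and the chain $(\mu, s') \po (\lambda, r) \prec (\nu, s)$ again contradicts interval-closedness.

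Once $V_\Gamma$ (resp.\ $V_{\Lambda \setminus \Gamma}$) is known to be an $\lie a$-submodule, the containments $V_\Gamma, V^\Gamma \in \Ob\ghat[\Gamma]$ follow from the canonical $\g$-isotypic decomposition $V[r] = \bigoplus_\lambda V[r]_{(\lambda)}$: since $V_\Gamma^+[r]$ consists precisely of the maximal vectors in those $V[r]_{(\lambda)}$ with $(\lambda, r) \in \Gamma$, and each isotypic component is $\univ(\g)$-generated by its maximal vectors, one identifies $V_\Gamma[r] = \bigoplus_{(\lambda, r) \in \Gamma} V[r]_{(\lambda)}$ and symmetrically $V_{\Lambda \setminus \Gamma}[r]$ as the complementary sum, so only weights in $\Gamma$ appear in $V^\Gamma$. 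Intersecting with a submodule $U \subset V$ gives the key identities $U_\Gamma = U \cap V_\Gamma$ and $U_{\Lambda \setminus \Gamma} = U \cap V_{\Lambda \setminus \Gamma}$, so both are $\lie a$-stable; since the hypotheses of (i) and (ii) are inherited by $U$ and $V/U$, the previous step applies to them as well and produces $U_\Gamma, (V/U)_\Gamma, U^\Gamma, (V/U)^\Gamma \in \Ob\ghat[\Gamma]$. For the quotient isomorphisms, the projection $V \to V/U$ restricts to $V_\Gamma \to (V/U)_\Gamma$ with kernel $U_\Gamma$ and induces $V^\Gamma \to (V/U)^\Gamma$ with kernel $U^\Gamma$; surjectivity in both cases follows by $\g$-semisimple lifting of maximal vectors, namely each $\bar w$ in $(V/U)_\Gamma^+$ (resp.\ $(V/U)_{\Lambda \setminus \Gamma}^+$) in grade $r$ has a preimage in a $\g$-module complement of $U[r]$ in $V[r]$, and this preimage is automatically maximal of the required weight.

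The one real obstacle is the first step; everything else is a direct consequence of the $\g$-isotypic decomposition of each graded piece. The interval-closedness of $\Gamma$ is essential precisely because it is what forbids weights outside $\Gamma$ from appearing in the middle of comparison chains, allowing \propref{P52} to convert the poset hypotheses of (i) and (ii) into the $\lie a$-stability of $V_\Gamma$ and $V_{\Lambda \setminus \Gamma}$.
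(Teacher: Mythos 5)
Your proof is correct and follows essentially the same approach as the paper: invoke \propref{P52} together with the hypothesis and interval-closedness of $\Gamma$ to show that $V_\Gamma$ (resp.\ $V_{\Lambda\setminus\Gamma}$, by applying \propref{P52} with $\Lambda\setminus\Gamma$ in place of $\Gamma$) is an $\lie a$-submodule, then use the $\g$-semisimplicity of each graded piece to handle the subquotient statements. You spell out several details the paper compresses---the isotypic decomposition $V_\Gamma[r]=\bigoplus_{(\lambda,r)\in\Gamma}V[r]_{(\lambda)}$, the identity $U_\Gamma=U\cap V_\Gamma$, and the lifting of maximal vectors for surjectivity---but these are exactly what the paper's phrase ``$V_\Gamma=U_\Gamma\oplus W_\Gamma$ as $\g$-modules'' is implicitly relying on, so the underlying argument is the same.
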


\begin{proof}
Suppose that  $V_{\Gamma}$ is not an $\lie{a}$-module. By \propref{P52}
there exists $(\lambda,r)\in\Lambda(V)\cap\Gamma$ and
$(\nu,s)\in\Lambda(V)\setminus\Gamma$ such that $(\lambda,r)\po(\nu,s)$.
By hypothesis we can choose $(\mu,k)\in\Gamma$ with
$(\lambda,r)\po(\nu,s)\po(\mu,k)$ which contradicts the fact that
$\Gamma$ is interval-closed. Suppose now that we have a short exact
sequence $$0\to U\to V\to W\to 0$$ of objects of $\ghat$.
Since $\Lambda(V) = \Lambda(U) \cup \Lambda(W)$, it is clear that $U$ and
$W$ both satisfy the hypothesis of $(i)$ and, hence,
$U_{\Gamma},W_{\Gamma} \in \ghat[\Gamma]$ and hence the inclusion of $U$
in $V$ induces a $\ghat[\Gamma]$--morphism $U_\Gamma\to V_\Gamma$ which
is obviously injective since $U^+_\Gamma\subset V_\Gamma^+$. Similarly
$W_\Gamma$ is a quotient of $V_\Gamma$ as objects of $\ghat[\Gamma]$ and
the exactness follows by noting  that $V_{\Gamma} = U_{\Gamma}\oplus
W_{\Gamma}$ as $\g$-modules.

The proof of part $(ii)$ is similar and hence omitted.
\end{proof}

\subsection{}

We now construct projective objects and projective resolutions of simple
objects in $\ghat[\Gamma]$ when $\Gamma$ is finite and interval-closed.

\begin{prop}\label{P54}
Suppose $\Gamma\subset\Lambda$ is finite and interval-closed with respect
to $\po$, and assume that $(\lambda,r),(\mu,s) \in \Gamma$.
\begin{enumerit}
\item $P(\lambda,r)^{\Gamma}$ is the projective cover in
$\curs{G}[\Gamma]$ of  $V(\lambda,r)$.

\item We have
$$[P(\lambda,r):V(\mu,s)] = [P(\lambda,r)^{\Gamma}:V(\mu,s)] = \dim
\Hom_{\curs{G}[\Gamma]}(P(\mu,s)^{\Gamma},P(\lambda,r)^{\Gamma}).$$

\item $\Hom_{\ghat}(P(\lambda,r),P(\mu,s)) \cong
\Hom_{\curs{G}[\Gamma]}(P(\lambda,r)^{\Gamma},P(\mu,s)^{\Gamma})$.

\item For all $j \in \Z_+$, $P_j(\mu,s)^{\Gamma} \in \curs{G}[\Gamma]$.
The induced sequence
$$\dots \stackrel{d_3^{\Gamma}}{\longrightarrow} P_2(\mu,s)^{\Gamma}
\stackrel{d_2^{\Gamma}}{\longrightarrow} P_1(\mu,s)^{\Gamma}
\stackrel{d_1^{\Gamma}}{\longrightarrow} P(\mu,s)^{\Gamma}
\stackrel{d_0^{\Gamma}}{\longrightarrow} V(\mu,s) \longrightarrow 0$$
\noindent is a finite projective resolution of $V(\mu,s) \in
\curs{G}[\Gamma]$.
\end{enumerit}
\end{prop}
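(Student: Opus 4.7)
The plan is to prove (i) first, as it is the structural cornerstone; parts (ii) and (iii) will then follow as formal consequences, and part (iv) will reduce to (i) via PBW. Two unifying observations will be used throughout. First, for $W\in\ghat[\Gamma]$ one has $W_{\Lambda\setminus\Gamma}=0$: every composition factor of $W$ lies in $\Gamma$, so the $V(\lambda)$-isotypic components of $W[r]$ with $(\lambda,r)\in\Lambda\setminus\Gamma$ are trivial. Second, for $V\in\ghat$, the functor $V\mapsto V^\Gamma$ on each graded piece $V[r]$ is projection onto those $V(\lambda)$-isotypic summands with $(\lambda,r)\in\Gamma$, and is therefore exact.

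For (i), I would first verify that $P(\lambda,r)_{\Lambda\setminus\Gamma}$ is $\lie a$-stable. Assume the contrary; then the proof of \propref{P52}, applied with $\Gamma$ replaced by $\Lambda\setminus\Gamma$, produces a covering pair $(\nu,s)\in\Gamma$ and $(\mu,t)\notin\Gamma$ in which both indices are highest weights of $\lie g$-summands of $P(\lambda,r)$, i.e.\ composition factor types. A short induction on composition length (using \propref{P41}(iii) and the cover characterization of \lemref{L26}) shows that every composition factor type of $P(\lambda,r)$ is $\po$-above $(\lambda,r)$, so the chain $(\lambda,r)\po(\mu,t)\po(\nu,s)$ has $(\lambda,r),(\nu,s)\in\Gamma$ while $(\mu,t)\notin\Gamma$, violating interval-closedness. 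Once $P(\lambda,r)^\Gamma$ is shown to be an object of $\curs{G}[\Gamma]$, the adjunction
\[\Hom_{\curs{G}[\Gamma]}(P(\lambda,r)^\Gamma,W)\;\cong\;\Hom_{\ghat}(P(\lambda,r),W)\;\cong\;\Hom_{\lie g}(V(\lambda),W[r])\]
for $W\in\ghat[\Gamma]$ (combining the first observation with \propref{P41}(i)) gives projectivity by exactness in $W$, and the cyclic generator $p_{\lambda,r}+P(\lambda,r)_{\Lambda\setminus\Gamma}$ with unique simple quotient $V(\lambda,r)$ identifies this projective as the cover.

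Parts (ii) and (iii) then follow formally. For (iii), the adjunction from (i) together with \propref{P41}(i) reduces both Hom-spaces to $\Hom_{\lie g}(V(\lambda),P(\mu,s)^\Gamma[r])$ and $\Hom_{\lie g}(V(\lambda),P(\mu,s)[r])$; these coincide since $(\lambda,r)\in\Gamma$ means the $V(\lambda)$-isotypic summand of $P(\mu,s)[r]$ survives passage to the $\Gamma$-quotient. For (ii), write $[P(\lambda,r):V(\mu,s)]=\dim\Hom_{\lie g}(V(\mu),P(\lambda,r)[s])$; the condition $(\mu,s)\in\Gamma$ and the second observation yield equality with the corresponding quantity for $P(\lambda,r)^\Gamma$, while a further application of the adjunction from (i) identifies this in turn with $\dim\Hom_{\curs{G}[\Gamma]}(P(\mu,s)^\Gamma,P(\lambda,r)^\Gamma)$.

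For (iv), \propref{P32} together with the $\lie g$-isotypic decomposition $\wedge^j(\lie a_+)\otimes V(\mu,s)\cong\bigoplus_i V(\nu_i,t_i)^{n_i}$ yields $P_j(\mu,s)\cong\bigoplus_i P(\nu_i,t_i)^{n_i}$ in $\ghat$. Since $(-)^\Gamma$ commutes with direct sums, the analysis reduces to each $P(\nu_i,t_i)^\Gamma$. When $(\nu_i,t_i)\in\Gamma$, part (i) gives projectivity in $\ghat[\Gamma]$. When $(\nu_i,t_i)\notin\Gamma$---which forces $j\ge 1$, hence $(\mu,s)\prec(\nu_i,t_i)$ by \propref{P44}(i)---every composition factor type $(\lambda',r')$ of $P(\nu_i,t_i)$ satisfies $(\nu_i,t_i)\po(\lambda',r')$, so interval-closedness applied to the chain $(\mu,s)\po(\nu_i,t_i)\po(\lambda',r')$ would force $(\nu_i,t_i)\in\Gamma$ whenever $(\lambda',r')\in\Gamma$; hence no composition factor of $P(\nu_i,t_i)$ lies in $\Gamma$, and $P(\nu_i,t_i)^\Gamma=0$. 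Exactness of the induced complex is inherited from \propref{P44}(ii) via the second observation. Finally, $P_j(\mu,s)$ is concentrated in grades $\ge s+j$, so for $j$ larger than $\max\{r:(\lambda,r)\in\Gamma\}-s$ the module $P_j(\mu,s)^\Gamma$ vanishes, yielding a finite resolution. The main obstacle is (i), where one must extract from \propref{P52} the sharper statement that the witnesses $(\nu,s)$ and $(\mu,t)$ are composition factor types (highest weights of $\lie g$-summands) rather than arbitrary weight-space indices, without which interval-closedness would not suffice to close the argument.
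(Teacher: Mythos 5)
Your proof is correct and reaches the same conclusions, but it reorganizes the argument in a few places that are worth noting.

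For part (i), the paper invokes Proposition~\ref{P53}(ii) together with Proposition~\ref{P41}(iii) to conclude at once that $P(\lambda,r)^\Gamma\in\curs{G}[\Gamma]$, and then establishes projectivity by applying $\Hom_{\ghat}(-,V(\mu,s))$ to the short exact sequence $0\to K\to P(\lambda,r)\to P(\lambda,r)^\Gamma\to 0$ (with $K=P(\lambda,r)_{\Lambda\setminus\Gamma}$) and observing that $\Hom_{\ghat}(K,V(\mu,s))=0$ for $(\mu,s)\in\Gamma$. You instead re-derive the membership statement by running Proposition~\ref{P52} directly, and you obtain projectivity from the representability isomorphism $\Hom_{\curs{G}[\Gamma]}(P(\lambda,r)^\Gamma,W)\cong\Hom_{\lie g}(V(\lambda),W[r])$ and the exactness of the right-hand side. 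Both approaches are valid; yours avoids the long exact sequence at the expense of re-proving a special case of Proposition~\ref{P53}, while the paper's one-line appeal to that proposition is more economical.

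For part (iv), the paper asserts projectivity of $P_j(\mu,s)^\Gamma$ by ``a similar procedure as in part (i)'' (i.e.\ the same $\Ext^1$ vanishing argument applied to $K_j=P_j(\mu,s)_{\Lambda\setminus\Gamma}$), which works directly on $P_j(\mu,s)$ without decomposing it. You instead write $P_j(\mu,s)\cong\bigoplus_i P(\nu_i,t_i)^{n_i}$ and treat each summand, proving $P(\nu_i,t_i)^\Gamma=0$ when $(\nu_i,t_i)\notin\Gamma$ by a chain/interval-closedness argument. This is a genuinely different and somewhat more explicit route, and it clarifies something the paper leaves implicit: $P_j(\mu,s)^\Gamma$ need not itself be a projective cover, but it is a direct sum of covers and zeros. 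Your observations about termination and exactness agree with the paper's.

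Your closing worry about Proposition~\ref{P52} --- namely that the witnesses need to be indices of genuine $V(\lambda)$-summands rather than arbitrary dominant weight-space indices --- is a fair reading of the statement, but it does not block your argument (nor the paper's): the proof of Proposition~\ref{P52} in fact produces a highest-weight vector $v\in V_\Gamma^+\cap V[r]_\lambda$ and a surjection onto $V(\nu,r+k)$, so both $(\lambda,r)$ and $(\nu,s)$ are indeed indices of simple $\lie g$-summands. This imprecision in the statement of $\Lambda(V)$ is present in the paper's own proof of Proposition~\ref{P54} as well, so it is a shared feature rather than a gap you have introduced.

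Overall the proposal is sound; the differences are organizational rather than substantive, with part (iv) being where you add the most useful detail.
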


\begin{proof}
By \propref{P41}(iii), we see that
$$(\nu,k) \succ (\lambda,r),\ \ (\nu,k) \in\Lambda(P(\lambda,r))
\setminus \{ (\lambda,r) \}.$$

\noindent By \propref{P53}(ii) we see that $P(\lambda,r)^\Gamma \in
\G[\Gamma]$ and maps onto $V(\lambda,r)$.
Let $K=P(\lambda,r)_{\Lambda\setminus\Gamma}$; thus, in $\ghat$, we have
a short exact sequence
$$0\to K \to P(\lambda,r) \to P(\lambda,r)^{\Gamma}\to 0.$$
Applying $\Hom_{\ghat}(-,V(\mu,s))$ yields the long exact sequence
\[\cdots\to \Hom_{\ghat}(K,V(\mu,s)) \to
\Ext_{\ghat}^1(P(\lambda,r)^{\Gamma},V(\mu,s)) \to 0. \]

\noindent If $(\mu,s) \in \Gamma$, we have,
$$\Hom_{\ghat}(K,V(\mu,s)) \cong \Hom_{\g}(K[s],V(\mu)) = 0,$$
and hence we have
$$ \Ext^1_{\ghat}(P(\lambda,r)^{\Gamma},V(\mu,s)) = 0.$$

\noindent In particular, this proves that
$$\Ext^1_{\curs{G}[\Gamma]}(P(\lambda,r)^{\Gamma},V(\mu,s)) = 0,$$

\noindent and hence $P(\lambda,r)^{\Gamma}$ is a projective object of
$\curs{G}[\Gamma]$.
The proof that $P(\lambda,r)^{\Gamma}$ is the projective cover of
$V(\lambda,r)$ is similar to the proof given in Proposition \ref{P41}.

For $(ii)$, we again consider the short exact sequence
$$0 \to K \to P(\lambda,r) \to P(\lambda,r)^{\Gamma} \to 0.$$
Since $\F(\g)$ is a semisimple category, we have,
$$\dim\Hom_{\g}(V(\mu),P(\lambda,r)[s]) =
\dim\Hom_{\g}(V(\mu),P(\lambda,r)^{\Gamma}[s]) +
\dim\Hom_{\g}(V(\mu),K[s]).$$

\noindent By the definition of $K$, we have,
$$\Hom_{\g}(V(\mu),K[s]) = 0,\qquad (\mu,s)\in\Gamma,$$
and hence we get
$$[P(\lambda,r):V(\mu,s)] = [P(\lambda,r)^{\Gamma}:V(\mu,s)].$$
The second equality follows by imitating (in $\curs{G}[\Gamma]$) the
proof of the first part of \propref{P41}.

For $(iii)$, choose a nonzero $f \in \Hom_{\ghat}(P(\lambda,r),
P(\mu,s))$.
Then, $f(1 \otimes V(\lambda,r)) \not\in P(\mu,s)_{\Lambda \setminus
\Gamma}$, so $f^\Gamma \neq 0$. Thus, we have an injective map
$$\Hom_{\ghat}(P(\lambda,r), P(\mu,s)) \to
\Hom_{\G[\Gamma]}(P(\lambda,r)^\Gamma, P(\mu,s)^\Gamma).$$

\noindent Since both of these spaces have the same dimension (by part
$(ii)$ and \propref{P41}(i)), the map is an isomorphism.

For $(iv)$, first note that $P_j(\lambda,r)^{\Gamma} \in
\Ob\curs{G}[\Gamma]$ by \propref{P44}(i) and \propref{P53}(ii).
Furthermore, a similar procedure as in part $(i)$ shows that
$P_j(\lambda,r)^{\Gamma}$ is projective in $\curs{G}[\Gamma]$. The fact
that the resolution terminates after finitely many steps follows from the
fact that $\Gamma$ is finite, along with the fact that $P_j(\lambda,r)[k]
= 0$ for all $k < r+j$.
\end{proof}

\subsection{}

We recall the following definition from \cite{CPS},\cite{PSW}, where we
define a length category in the sense of \cite{Gab}.

\begin{defn}
Suppose $\curs{C}$ is an abelian $\C$--linear length category.
We say that $\curs{C}$ is {\it directed} if:
\begin{enumerit}
\item The simple objects in $\curs{C}$ are parametrized by a poset
$(\Pi,\leq)$ such that the set $\{\xi \in \Pi : \xi < \tau\}$ is finite
for all $\tau \in \Pi$.

\item For all simple objects $S(\xi),S(\tau) \in \curs{C}$,
$\Ext^1_{\curs{C}}(S(\xi),S(\tau)) \neq 0 \implies \xi < \tau$.
\end{enumerit}
\end{defn}

\noindent In the case when $(\Pi,\leq)$ is finite, a directed category is
highest weight in the sense of \cite{CPS}.

We end this section by noting that we have established that for any
subset $\Gamma$ of $\Lambda$, the category $\curs{G}[\Gamma]$ is
directed, and if $\Gamma$ is finite and interval-closed, then
$\curs{G}[\Gamma]$ is a directed highest weight category.

\section{Undeformed Infinitesimal Hecke Algebras}

For the rest of the paper, we restrict our attention to  the case when
$\lie a_k=0$ for all $k>1$ and $\lie{a}_1 = V$, where $V \in \F(\g)$ is
such that $\wt(V) \neq \{ 0 \}$. In this case, the algebra $\lie a=\lie
g\ltimes V$ and we identify $V$ with the abelian ideal $0 \ltimes V$ of
$\lie{a}$.
In particular, this means that $\univ(\lie a_+) = \Sym(V)$, and it is
immediate that
$$P_j(\lambda,r) = \univ(\lie g\ltimes V) \otimes_{\univ(\g)} \wedge^j V
\otimes V(\lambda,r)$$

\noindent is generated as an $\lie{a}$-module by the component of degree
$r+j$. This motivates our search for Koszulity in this picture.

\subsection{}

We begin by computing extensions between the simple objects.

\begin{prop}\label{P61}
For all $j \in \Z_+$ and $(\mu,r),(\nu,s) \in \Lambda$,
$$\Ext_{\ghat}^j(V(\mu,r),V(\nu,s)) \cong \left\{
\begin{array}{ll}
\Hom_{\g}(\wedge^j V \otimes V(\mu),V(\nu)), & \mbox{if } j = s-r;\\
0, & \mbox{otherwise.}
\end{array}\right.$$
\end{prop}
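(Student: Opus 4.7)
The plan is to compute $\Ext^j_{\ghat}(V(\mu,r), V(\nu,s))$ by applying $\Hom_{\ghat}(-, V(\nu,s))$ to the projective resolution constructed in \propref{P44}(ii), exploiting the fact that the specialization $\lie a_+ = V$ concentrates $\wedge^j(\lie a_+)$ in a single degree so that the resulting cochain complex has at most one nonzero term.

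More concretely, since $V$ lives in grade one, $\wedge^j V$ is concentrated in grade $j$, and therefore the $\g$-module $\wedge^j V \otimes V(\lambda,r)$ lives in grade $r+j$. Just as in the proof of \propref{P41}(i) (which is the $j=0$ case), the Poincar\'e--Birkhoff--Witt theorem together with the defining construction $P_j(\mu,r) = \univ(\lie a) \otimes_{\univ(\g)} (\wedge^j V \otimes V(\mu,r))$ yields the adjunction
$$\Hom_{\ghat}(P_j(\mu,r), W) \cong \Hom_{\g}(\wedge^j V \otimes V(\mu), W[r+j])$$
for any $W \in \Ob\ghat$. Specializing to $W = V(\nu,s)$, the right-hand side is zero unless $s = r+j$, in which case it equals $\Hom_{\g}(\wedge^j V \otimes V(\mu), V(\nu))$.

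Applying this to the Koszul resolution produces a cochain complex
$$0 \to \Hom_{\ghat}(P(\mu,r), V(\nu,s)) \to \Hom_{\ghat}(P_1(\mu,r), V(\nu,s)) \to \Hom_{\ghat}(P_2(\mu,r), V(\nu,s)) \to \cdots$$
in which the term in cohomological degree $j$ vanishes unless $j = s-r$. Hence if $s < r$ or $s \ne r + j$ the complex has no contribution in degree $j$ and $\Ext^j_{\ghat}(V(\mu,r), V(\nu,s)) = 0$, while for $j = s-r \geq 0$ the only possibly nonzero term sits in degree $j$, flanked on both sides by zero, so its cohomology coincides with the term itself, namely $\Hom_{\g}(\wedge^j V \otimes V(\mu), V(\nu))$.

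The main subtlety to check is just the adjunction identity above and the bookkeeping of degrees; no step requires an auxiliary vanishing argument for the differentials, because the grading on $\lie a_+ = V$ kills all but one Hom space in the complex. This is precisely the simplification afforded by the hypothesis $\lie a_k = 0$ for $k > 1$.
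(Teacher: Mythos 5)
Your proof is correct. Both you and the paper use the Koszul resolution from \propref{P44}(ii) together with the observation that degree constraints force $\Hom$-spaces to vanish, so the underlying computation is the same; but your presentation is organized differently and is cleaner. The paper proceeds by dimension shifting, reducing $\Ext^j$ to $\Ext^1(\im d_{j-1}, V(\nu,s))$ and then studying the four-term exact sequence obtained from $0 \to \im d_j \to P_{j-1}(\mu,r) \to \im d_{j-1} \to 0$, which requires a hands-on argument that a nonzero morphism $\im d_j \to V(\nu,s)$ forces $j = s-r$. You bypass images and dimension shifting entirely by noticing that the Frobenius-reciprocity adjunction $\Hom_{\ghat}(P_j(\mu,r), W) \cong \Hom_{\g}(\wedge^j V \otimes V(\mu), W[r+j])$ identifies each term of the cochain complex $\Hom_{\ghat}(P_\bullet(\mu,r), V(\nu,s))$ explicitly, so that the complex is concentrated in cohomological degree $s-r$ and its cohomology there is simply the term itself. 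What you gain is a one-step argument covering both the vanishing and the nonvanishing cases uniformly; what the paper's route arguably emphasizes is the role of the images $\im d_j$, which reappear later in the paper. Both are valid, and your adjunction identity is exactly the extension of \propref{P41}(i) to the $P_j$, as you note.
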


\begin{proof}
Truncating the projective resolution from \propref{P44}(ii) at\newline
$\dots \stackrel{d_j}{\longrightarrow} P_{j-1}(\mu,r)
\stackrel{d_{j-1}}{\longrightarrow} \im d_{j-1} \longrightarrow 0$ yields
$$\Ext^j_{\ghat}(V(\mu,r),V(\nu,s)) \cong \Ext^1_{\ghat}(\im
d_{j-1},V(\nu,s)).$$

\noindent Applying $\Hom_{\ghat}(-, V(\nu,s))$ to the short exact
sequence
\[ 0 \to \im d_j \to P_{j-1}(\mu,r) \to \im d_{j-1} \to 0 \]

\noindent yields the exact sequence
\begin{eqnarray*}
&& 0 \to \Hom_{\ghat}(\im d_{j-1}, V(\nu,s)) \to
\Hom_{\ghat}(P_{j-1}(\mu,r), V(\nu,s)) \to\\
&& \Hom_{\ghat}(\im d_j, V(\nu,s)) \to \Ext^1_{\ghat}(\im d_{j-1},
V(\nu,s)) \to 0.
\end{eqnarray*}

\noindent The result follows if we prove that
$$\Hom_{\ghat}(\im d_j,V(\nu,s)) \ne 0\implies j = s-r.$$

\noindent Suppose $f\in \Hom_{\ghat}(\im d_j,V(\nu,s))$ is nonzero and
choose $v\in \im d_j[s]$ with $f(v)\ne 0$. It is easily seen that we may
write
$$v = \sum_p (u_p\otimes 1)d_j(1\otimes w_p),\ \ u_p \in \Sym V,\ \ w_p
\in \wedge^j V \otimes V(\mu,r),$$

\noindent and hence we have
$$f(v) = \sum_p (u_p\otimes 1)f(d_j(1\otimes w_p)).$$

\noindent Since $d_j(1\otimes w_p) \in \im d_j[j+r]$ for all $p$, we see
that $f(v)\in V(\nu,s)[r+j]$ and hence $s=r+j$.

If $j = s - r$, then since $\wedge^{j-1} V \otimes V(\mu,r)$ is
concentrated in degree $s-1$ and  $P_{j-1}(\mu,r)$ is the projective
cover of $\wedge^{j-1} V \otimes V(\mu,r)$, we have
$$\Hom_{\ghat}(P_{s-r-1}(\mu,r), V(\nu,s)) = 0,$$

\noindent and:
\begin{eqnarray*}
&& \Ext_{\ghat}^{s-r}(V(\mu,r),V(\nu,s))  \cong \Ext^1_{\ghat}(\im
d_{s-r-1},V(\nu,s)) \\
& \cong & \Hom_{\ghat}(\im d_{s-r},V(\nu,s)) \cong  \Hom_{\g}((\im
d_{s-r})[s], V(\nu)) \\
& \cong & \Hom_{\g}(\wedge^{s-r} V\otimes V(\mu), V(\nu)).
\end{eqnarray*}

\noindent This completes the proof of the Proposition.
\end{proof}

\subsection{}

If $\Gamma \subset \Lambda$ is finite and interval-closed, then we can
make the following observation regarding $\Ext$-groups in the truncated
subcategory $\curs{G}[\Gamma]$.

\begin{prop}\label{P62}
Let $\Gamma$ be finite and interval-closed. For all $(\mu,r),(\nu,s) \in
\Gamma$, we have
$$\Ext^j_{\curs{G}[\Gamma]}(V(\mu,r),V(\nu,s)) \cong
\Ext^j_{\ghat}(V(\mu,r),V(\nu,s)) \ \forall j\in\Z_+.$$
\end{prop}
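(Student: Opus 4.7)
The plan is to compute each $\Ext^j$-group via a projective resolution of $V(\mu,r)$ and to show that the two resulting $\Hom$-complexes coincide term by term. By \propref{P44}(ii), the complex $(P_\bullet(\mu,r), d_\bullet)$ is a projective resolution of $V(\mu,r)$ in $\ghat$, while by \propref{P54}(iv), the complex $(P_\bullet(\mu,r)^{\Gamma}, d_\bullet^{\Gamma})$ is a finite projective resolution of $V(\mu,r)$ in $\curs{G}[\Gamma]$. Applying $\Hom_{\ghat}(-,V(\nu,s))$ to the former and $\Hom_{\curs{G}[\Gamma]}(-,V(\nu,s))$ to the latter, the cohomologies compute $\Ext^j_{\ghat}(V(\mu,r),V(\nu,s))$ and $\Ext^j_{\curs{G}[\Gamma]}(V(\mu,r),V(\nu,s))$, respectively.

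The crux is therefore to establish a natural isomorphism
\[ \Hom_{\ghat}(P_j(\mu,r), V(\nu,s)) \cong \Hom_{\curs{G}[\Gamma]}(P_j(\mu,r)^{\Gamma}, V(\nu,s)) \]
for each $j\in\Z_+$. The canonical surjection $P_j(\mu,r) \onto P_j(\mu,r)^{\Gamma}$ together with the full inclusion $\curs{G}[\Gamma]\hookrightarrow\ghat$ yields an injection from right to left by precomposition. To see surjectivity, I would show that any $f\in\Hom_{\ghat}(P_j(\mu,r),V(\nu,s))$ annihilates the $\lie a$--submodule $P_j(\mu,r)_{\Lambda\setminus\Gamma}$. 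Since $V(\nu,s)$ is concentrated in degree $s$, only the graded piece in degree $s$ can contribute to the image; and by the very definition of $(-)_{\Lambda\setminus\Gamma}$, this piece decomposes as a $\g$--module into $\g$--simples of type $V(\lambda')$ indexed by pairs $(\lambda',s)\in\Lambda\setminus\Gamma$. As $(\nu,s)\in\Gamma$ by hypothesis, no summand of type $V(\nu)$ appears, so the restriction of the $\g$--map $f$ to this piece must be zero.

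Because the differentials $d_j^{\Gamma}$ are by construction induced from $d_j$, these pointwise isomorphisms assemble into an isomorphism of cochain complexes, and passing to cohomology gives the required isomorphism in every degree. The main obstacle, and the only step requiring real argument, is the vanishing of morphisms into $V(\nu,s)$ on $P_j(\mu,r)_{\Lambda\setminus\Gamma}$; once that is in hand the rest is bookkeeping about resolutions.
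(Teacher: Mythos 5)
Your proof is correct and takes essentially the same route as the paper: the paper also passes to the projective resolutions $P_\bullet(\mu,r)$ and $P_\bullet(\mu,r)^\Gamma$ from Propositions \ref{P44} and \ref{P54} and establishes that the induced map of $\Hom$-complexes is an isomorphism, deferring the details to \cite[Prop.~3.3]{CG2}. You have simply filled in those details — the key observation, correctly made, being that $V(\nu,s)$ lives in a single degree and its $\g$-isotype $V(\nu)$ cannot occur in $P_j(\mu,r)_{\Lambda\setminus\Gamma}[s]$ because $(\nu,s)\in\Gamma$.
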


\begin{proof}
By \propref{P44} and \propref{P54}, we have a projective resolution
$P_{\bullet}(\mu,s)$ of each simple object $V(\mu,s)$ in
$\curs{G}[\Gamma]$ and $\ghat$. Then one shows as in \cite[Proposition
3.3]{CG2}, that for all $(\lambda,r) \in \Gamma$,
$$\Hom_{\ghat}(P_{\bullet}(\mu,s),V(\lambda,r)) \to
\Hom_{\curs{G}[\Gamma]}(P_{\bullet}(\mu,s)^{\Gamma},V(\lambda,r))$$
is an isomorphism.
\end{proof}

\subsection{}

Define $\displaystyle P(\Gamma) := \bigoplus_{(\lambda,r)\in\Gamma}
P(\lambda,r),$ and set
$$\lie{B}(\Gamma) := \End_{\ghat} P(\Gamma), \mbox{ and }
\lie{B}^{\Gamma}(\Gamma) :=
\End_{\curs{G}[\Gamma]}(P(\Gamma)^{\Gamma}).$$

\noindent Notice that $\lie{B}(\Gamma)$ is graded via
$$\lie{B}(\Gamma)[k] = \bigoplus_{(\lambda,r),(\mu,r-k)\in\Gamma}
\Hom_{\ghat}(P(\lambda,r),P(\mu,r-k)).$$

\noindent In particular, $\lie{B}(\Gamma)[0] =
\bigoplus_{(\lambda,r)\in\Gamma}\End_{\ghat}(P(\lambda,r))$.

\begin{prop}
If $\Gamma\subset \Lambda$ is finite and interval-closed, then the
category $\curs{G}[\Gamma]$ is equivalent to the category of right
modules over $\lie{B}(\Gamma)$.
\end{prop}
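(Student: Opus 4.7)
The plan is to invoke the classical Morita-type equivalence for a length category with a finite-dimensional projective generator. The first step is to check that
$$P(\Gamma)^{\Gamma} := \bigoplus_{(\lambda,r) \in \Gamma} P(\lambda,r)^{\Gamma}$$
is a finite-dimensional projective generator of $\curs{G}[\Gamma]$. Finite-dimensionality follows because $\Gamma$ is finite and $\curs{G}[\Gamma]$ consists by definition of finite-dimensional objects. Projectivity is immediate from \propref{P54}(i) applied summand-wise. It is a generator because every simple object $V(\lambda,r) \in \curs{G}[\Gamma]$ is a quotient of the summand $P(\lambda,r)^{\Gamma}$ (again \propref{P54}(i)), and every object of $\curs{G}[\Gamma]$ has finite length. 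Moreover, \propref{P54}(iii) supplies a canonical ring isomorphism
$$\lie{B}(\Gamma) = \End_{\ghat}(P(\Gamma)) \cong \End_{\curs{G}[\Gamma]}(P(\Gamma)^{\Gamma}),$$
so in particular $\lie{B}(\Gamma)$ is finite-dimensional.

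Next I would define the functor $F: \curs{G}[\Gamma] \to \Mod\text{-}\lie{B}(\Gamma)$ by $F(V) := \Hom_{\curs{G}[\Gamma]}(P(\Gamma)^{\Gamma}, V)$, with right $\lie{B}(\Gamma)$-action given by precomposition under the identification above. This functor is exact because $P(\Gamma)^{\Gamma}$ is projective, and faithful because it is a generator; it carries $P(\Gamma)^{\Gamma}$ itself to the free right module $\lie{B}(\Gamma)$, and carries direct summands of $(P(\Gamma)^{\Gamma})^n$ to finitely generated projective right $\lie{B}(\Gamma)$-modules. On Hom-spaces between direct sums of summands of $P(\Gamma)^{\Gamma}$, $F$ is an isomorphism essentially by construction; this is the statement that $F$ is fully faithful on the additive subcategory of projectives.

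The last step is the standard five-lemma argument extending this from projectives to all of $\curs{G}[\Gamma]$. Given $V \in \curs{G}[\Gamma]$, the generator property together with finite-dimensionality produces a two-term projective presentation
$$(P(\Gamma)^{\Gamma})^a \longrightarrow (P(\Gamma)^{\Gamma})^b \longrightarrow V \longrightarrow 0;$$
applying $F$ yields a presentation of $F(V)$ by free right $\lie{B}(\Gamma)$-modules. Given a finitely generated right $\lie{B}(\Gamma)$-module $M$, choose a free presentation $\lie{B}(\Gamma)^a \to \lie{B}(\Gamma)^b \to M \to 0$; the cokernel in $\curs{G}[\Gamma]$ of the corresponding map of projectives gives an object $V$ with $F(V) \cong M$, establishing essential surjectivity. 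Fullness on arbitrary objects then follows from fullness on projectives, exactness of $F$, and the five-lemma applied to the morphism of presentations.

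I do not expect any genuine obstacle; the construction is a routine assembly of the Morita argument. The only point requiring mild care is the translation between $\End_{\ghat}$ and $\End_{\curs{G}[\Gamma]}$, which is provided precisely by \propref{P54}(iii), together with the observation that passing from $\ghat$-morphisms to their restrictions does not lose any information when the target already lies in $\curs{G}[\Gamma]$.
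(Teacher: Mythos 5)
Your proof is correct and takes essentially the same approach as the paper: the same functor $\Hom_{\curs{G}[\Gamma]}(P(\Gamma)^{\Gamma},-)$ and the same use of \propref{P54}(iii) to identify $\lie{B}(\Gamma)$ with $\End_{\curs{G}[\Gamma]}(P(\Gamma)^{\Gamma})$. The only difference is that the paper simply cites the standard Morita-type result (Bass, Theorem II.1.3) once the projective generator is in hand, whereas you unwind that argument explicitly.
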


\begin{proof}
By \propref{P54}(iii), $\lie{B}(\Gamma) \cong \lie{B}^\Gamma(\Gamma)$ if
$\Gamma$ is finite and interval-closed; thus, it is standard
(\cite[Theorem II.1.3]{B}) that
$$\Hom_{\curs{G}[\Gamma]}(P(\Gamma)^{\Gamma},-): \curs{G}[\Gamma] \to
\Mod - \lie{B}(\Gamma)$$
is an equivalence of categories.
\end{proof}

\section{Faces of Polytopes and Koszul Algebras}

This section is devoted to proving the main theorem.

\subsection{}

We begin with a key technical observation about the set of weights which
lie on a face of the weight polytope of $V$. Namely, we wish to consider
the subsets $\Psi \subset \wt(V)$ that satisfy the following property:
\begin{align}\label{rigid}
& \mbox{If } \sum_{\alpha \in \Psi} m_\alpha \alpha = \sum_{\beta \in
\wt(V)} r_\beta \beta, \mbox{ for } m_\alpha, r_\beta \in \nn,\\
& \mbox{then } \sum_\alpha m_\alpha \leq \sum_\beta r_\beta, \mbox{ with
equality if and only if } \beta \in \Psi \mbox{ whenever } r_\beta >
0.\nonumber
\end{align}

The main result of \cite{KhRi} states that $\Psi$ satisfies \eqref{rigid}
if and only if the set $\Psi$ lies on a proper face of the weight
polytope of $V$.

\subsection{}

For our next result, recall $d_{\Psi}$  and $\leq_\Psi$ defined in
Section 1.4.

\begin{prop}\label{P72}
Suppose $\Psi \subset \wt(V)$ satisfies \eqref{rigid}.
\begin{enumerit}
\item $\lpsi$ is a partial order on $\h^*$. Moreover,
$$d_{\Psi}(\eta,\mu) + d_{\Psi}(\mu,\nu) = d_{\Psi}(\eta,\nu) \ \forall \
\eta \lpsi \mu \lpsi \nu \in \h^*.$$

\item $\Psi$ induces a refinement $\popsi$ of the partial order $\po$ on
$\Lambda = P^+ \times \Z$ via: $(\mu,r) \popsi (\lambda,s)$ if and only
if $\mu \lpsi \lambda$ and $d_{\Psi}(\mu,\lambda) = s - r$. If the
interval $[(\nu,r),(\mu,s)]_{\popsi}$ is nonempty, then
$[(\nu,r),(\mu,s)]_{\popsi} = [(\nu,r),(\mu,s)]_{\po}$.
\end{enumerit}
\end{prop}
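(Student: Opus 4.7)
The plan is to treat the two parts of the proposition in sequence, with property~\eqref{rigid} entering at each antisymmetry/tightness step.

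For part (i), reflexivity and transitivity of $\lpsi$ are immediate since $\nn \Psi$ contains $0$ and is closed under addition. For antisymmetry, if $\mu \lpsi \nu$ and $\nu \lpsi \mu$, I would add the two decompositions to obtain $0 = \sum_{\alpha \in \Psi}(m_\alpha + m'_\alpha)\alpha$ with $m_\alpha, m'_\alpha \in \nn$. Applying \eqref{rigid} to this equation against the trivial right-hand side $\sum_{\beta \in \wt(V)} 0 \cdot \beta$ forces $\sum_\alpha(m_\alpha + m'_\alpha) \leq 0$, hence all coefficients vanish and $\mu = \nu$.

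The additivity $d_\Psi(\eta,\mu) + d_\Psi(\mu,\nu) = d_\Psi(\eta,\nu)$ decomposes into two inequalities. The $\leq$ direction is a routine triangle inequality: concatenate optimal $\Psi$-decompositions of $\mu - \eta$ and $\nu - \mu$. The $\geq$ direction is where \eqref{rigid} bites. Fix any decompositions $\mu - \eta = \sum n_\alpha \alpha$ and $\nu - \mu = \sum m_\alpha \alpha$ in $\nn\Psi$, and fix an optimal decomposition $\nu - \eta = \sum p_\alpha \alpha$ achieving $d_\Psi(\eta,\nu)$. Then the identity $\sum (m_\alpha + n_\alpha)\alpha = \sum p_\alpha \alpha$ in $\h^*$ has both sides supported in $\Psi$, and applying \eqref{rigid} with each side in turn playing the role of the LHS forces $\sum(m_\alpha + n_\alpha) = \sum p_\alpha$. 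Choosing $m_\alpha, n_\alpha$ optimal then yields $d_\Psi(\eta,\mu) + d_\Psi(\mu,\nu) = d_\Psi(\eta,\nu)$.

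For part (ii), I would first verify that $\popsi$ is a partial order on $\Lambda$: reflexivity is immediate, antisymmetry reduces to that of $\lpsi$ together with the matching of gradings, and transitivity is precisely the content of the additivity proved in (i). That $\popsi$ refines $\po$ is geometric: since $\Psi \subset \wt(V) = \wt(\lie{a}_1)$ and $\lie{a}_k = 0$ for $k > 1$, an optimal $\Psi$-decomposition of $\lambda - \mu$ of length $s-r$ unpacks into an honest chain of $(s-r)$ cover relations in $\po$. This gives the inclusion $[(\nu,r),(\mu,s)]_{\popsi} \subset [(\nu,r),(\mu,s)]_{\po}$ for free.

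The reverse inclusion, when the $\popsi$-interval is nonempty (so $d_\Psi(\nu,\mu) = s-r$), is the main obstacle and again leans on the equality clause of \eqref{rigid}. Given $(\eta,p) \in [(\nu,r),(\mu,s)]_{\po}$, I would write $\eta - \nu = \sum_{\beta \in \wt(V)} c_\beta \beta$ with $\sum c_\beta = p-r$ and $\mu - \eta = \sum_{\beta \in \wt(V)} c'_\beta \beta$ with $\sum c'_\beta = s-p$, so that $\mu - \nu$ admits a $\wt(V)$-decomposition of total length $s-r$. Comparing this with the optimal $\Psi$-decomposition of $\mu - \nu$ of the same length $s-r$ via \eqref{rigid} puts us in the equality case, which forces $c_\beta = c'_\beta = 0$ whenever $\beta \notin \Psi$. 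Consequently $\nu \lpsi \eta \lpsi \mu$, and applying part (i) together with the optimality constraints pins down $d_\Psi(\nu,\eta) = p-r$ and $d_\Psi(\eta,\mu) = s-p$, placing $(\eta,p)$ inside the $\popsi$-interval as required. The conceptual point to emphasize is that the equality clause of \eqref{rigid} is exactly the ``no-shortcut'' statement needed to prevent a $\po$-chain between $\popsi$-comparable endpoints from straying outside $\Psi$.
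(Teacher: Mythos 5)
Your proof is correct and takes essentially the same route as the paper's: antisymmetry comes from summing the two $\Psi$-decompositions and invoking \eqref{rigid}; additivity of $d_\Psi$ comes from the observation that \eqref{rigid}, applied with either side as the ``left-hand sum,'' forces every $\Psi$-decomposition of a fixed element to have the same total degree; and the interval equality in (ii) follows by splicing the two $\wt(V)$-decompositions of $\mu-\eta$ and $\eta-\nu$ into one of $\mu-\nu$ and using the equality clause of \eqref{rigid} to force every summand into $\Psi$. The only cosmetic difference is that you frame the additivity step as two inequalities when the $\geq$ half of your argument already yields equality outright, so the separate triangle-inequality remark is redundant.
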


\begin{proof}
By definition, $\lpsi$ is reflexive and transitive. To see that $\lpsi$
is anti-symmetric, let
\[\nu - \mu = \sum_{\beta \in \Psi} r_{\beta}\beta, \ \ \mu - \nu =
\sum_{\beta \in \Psi} m_{\beta}\beta, \ \ r_{\beta},m_{\beta} \in \Z_+ \
\forall \ \beta \in \Psi.\]

\noindent Then,
\[0 = \sum_{\beta \in \Psi} (r_{\beta} + m_{\beta})\beta,\]

\noindent which gives $r_{\beta} + m_{\beta} = 0$ for all $\beta \in
\Psi$ using condition \eqref{rigid}. In particular, $r_{\beta} = 0$ for
all $\beta \in \Psi$, so $\mu = \nu$. This shows that $\lpsi$ is a
partial order on $\lie{h}^*$.

Suppose that $\mu \lpsi \nu$, and let
\[\nu - \mu = \sum_{\beta\in \Psi} r_{\beta}\beta = \sum_{\beta \in \Psi}
m_{\beta}\beta, \ \ r_{\beta},m_{\beta}\in\Z_+ \ \forall \ \beta \in \Psi.\]

\noindent Applying condition \eqref{rigid} to each sum gives
\[\sum_{\beta \in \Psi} r_{\beta} \leq \sum_{\beta \in \Psi} m_{\beta}
\leq \sum_{\beta \in \Psi} r_{\beta},\]

\noindent which shows that $d_{\Psi}(\mu,\nu)$ is taken over a singleton
set. This uniqueness and the fact that $\lambda - \mu = (\lambda - \nu) +
(\nu - \mu)$ show that $d_{\Psi}(\mu,\nu) + d_{\Psi}(\nu,\lambda) =
d_{\Psi}(\mu,\lambda)$.

The fact that $\popsi$ is a partial order follows immediately from part
$(i)$.

Notice that \[(\mu,r)\preccurlyeq (\lambda,s) \iff
\lambda - \mu = \sum_{\nu \in \wt(V)} m_{\nu}\nu, \ m_{\nu} \in
\Z_+ \forall \nu \in \wt(V), \ \sum_{\nu \in \wt(V)} m_{\nu} = s-r.\]
It follows immediately that $\popsi$ is a refinement of $\po$ and that
the interval $[(\nu,r),(\mu,s)]_{\popsi}$ is a subset of
$[(\nu,r),(\mu,s)]_{\po}$ for all $(\nu,r) \popsi (\mu,s)$.

Now, suppose that \[(\nu,r) \po (\eta,t) \po (\mu,s), \ \ (\nu,r) \popsi
(\mu,s) \in \Gamma\] Then, we can write
\[\mu - \eta = \sum_{i=1}^{t-s}\beta_i, \ \ \eta - \nu = \sum_{j=1}^{s-r}
\gamma_j, \ \beta_i, \gamma_j \in \wt(V) \ \forall \ i,j.\]

Since
\[\mu - \nu = (\mu -\eta) + (\eta - \nu), \mbox{ and } (s -t) + (t - r) =
s-r = d_{\Psi}(\mu,\lambda),\]

\noindent it follows that $\beta_i,\gamma_j \in \Psi$ for all $i,j$ by
condition \eqref{rigid}. This gives \[(\nu,r) \popsi (\eta,t) \popsi
(\mu,s),\] which proves $(ii)$.
\end{proof}

\begin{rem}
In \cite{CG2}, the authors work with $V = \g_{\ad}$ and $\Psi \subset
R^+$. However, they use the partial order $\popsi'$ on
$\Lambda$ given by $(\lambda,r) \popsi' (\mu,s)$ if and only if $\mu
\lpsi \lambda$ and $d_{\Psi}(\mu,\lambda) = s-r$. We use $\popsi$
instead, because $\popsi'$ is not a refinement of the standard partial
order $\po$ on $\Lambda$.
\end{rem}

\subsection{}

We need the following well-known result.

\begin{lem} \label{L73}
Suppose $\g$ is a complex semisimple Lie algebra, $V \in \F(\g)$, and
$\lambda,\mu \in P^+$. Define $V^+ := \{ v \in V : \lie{n}^+v = 0 \}$.
\begin{enumerit}
\item $\dim \Hom_{\g}(V(\lambda),V) = \dim(V^+\cap V_{\lambda})$.

\item As vector spaces,
$$\Hom_{\g}(V\otimes V(\mu),V(\lambda)) \cong \{v \in
V_{\lambda-\mu}:(x_{\alpha_i}^+)^{\mu(h_i)+1}v =
(x_{\alpha_i}^-)^{\lambda(h_i)+1} v = 0\}.$$
\end{enumerit}\hfill\qedsymbol
\end{lem}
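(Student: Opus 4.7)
For part (i), my plan is to invoke complete reducibility of $\F(\g)$ together with Schur's lemma. Writing $V = \bigoplus_{\nu \in P^+} V(\nu)^{\oplus m_\nu}$, Schur gives $\dim \Hom_{\g}(V(\lambda), V) = m_\lambda$ at once. Separately, $V^+ = \bigoplus_\nu (V(\nu)^+)^{\oplus m_\nu}$, and since each $V(\nu)^+ = \C v_\nu$ is one-dimensional of weight $\nu$, the intersection $V^+ \cap V_\lambda$ is supported only on the $V(\lambda)$-isotypic summand, giving dimension $m_\lambda$ as well. This is essentially immediate and can be stated in a few lines.

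For part (ii), the plan is to first reduce to the case $V = V(\nu)$ simple via the isotypic decomposition (both sides of the claimed isomorphism are additive in $V$), and then to invoke the classical identification of Hom spaces with highest weight vectors in tensor products. Complete reducibility of $V(\nu) \otimes V(\mu)$ together with Schur yields
\[
\dim \Hom_{\g}(V(\nu) \otimes V(\mu), V(\lambda)) \;=\; \dim \bigl(V(\nu) \otimes V(\mu)\bigr)^{\lie{n}^+}_\lambda,
\]
so the remaining task is to exhibit a vector-space isomorphism between $(V(\nu) \otimes V(\mu))^{\lie{n}^+}_\lambda$ and $\{v \in V(\nu)_{\lambda-\mu} : (x_{\alpha_i}^+)^{\mu(h_i)+1} v = (x_{\alpha_i}^-)^{\lambda(h_i)+1} v = 0 \text{ for all } i\}$.

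The bijection is the classical Parthasarathy--Ranga Rao--Varadarajan style ``leading-term / correction-term'' construction. Given a highest weight vector $w$ of weight $\lambda$ in $V(\nu) \otimes V(\mu)$, I would expand it via the weight filtration on $V(\mu)$ as $w = v \otimes v_\mu + \sum_j v_j \otimes u_j$ with $\wt(u_j) < \mu$, and assign $w \mapsto v \in V(\nu)_{\lambda-\mu}$. Conversely, given $v$ subject to the two Serre-type conditions, the correction terms $v_j \otimes u_j$ are built recursively, solving the $\lie{n}^+$-invariance equations weight-by-weight in $V(\mu)$. The condition $(x_{\alpha_i}^+)^{\mu(h_i)+1} v = 0$ is precisely the obstruction that must vanish in order for the recursion to be compatible with the defining relation $(x_{\alpha_i}^-)^{\mu(h_i)+1} v_\mu = 0$ of $V(\mu)$; the condition $(x_{\alpha_i}^-)^{\lambda(h_i)+1} v = 0$ encodes the corresponding defining relation of $V(\lambda)$ on the cyclic generator obtained from $w$.

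The main obstacle is this careful inductive construction of correction terms and the precise matching of each Serre-type condition to the corresponding defining relation of $V(\mu)$ (resp.\ $V(\lambda)$). A cleaner alternative route that avoids the explicit recursion would proceed through the Verma presentation $V(\mu) = M(\mu)/\bigl\langle (x_{\alpha_i}^-)^{\mu(h_i)+1} v_\mu \bigr\rangle$: Frobenius reciprocity from $\lie b^+$ together with part (i) identifies $\Hom_{\g}(V \otimes M(\mu), V(\lambda))$ with the $\mu$-weight space of $\lie n^+$-invariants in $V^* \otimes V(\lambda) \cong \Hom(V, V(\lambda))$, and descending this identification through the Serre quotients on the $V(\mu)$ side and the defining relations on the $V(\lambda)$ side cuts out precisely the two stated conditions on elements of $V_{\lambda-\mu}$.
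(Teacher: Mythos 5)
The paper does not prove this Lemma: it is introduced as a ``well-known result'' and the statement is immediately followed by the qed symbol, so there is no in-paper argument to compare against; your proposal is supplying something the authors chose to cite rather than prove. Your argument for (i) is correct and complete --- exactly the expected consequence of complete reducibility and Schur's lemma.

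Part (ii) is a sketch rather than a proof, and you acknowledge as much: the leading-term/correction-term bijection between $(V\otimes V(\mu))^{\lie n^+}_\lambda$ and the stated subspace of $V_{\lambda-\mu}$ is indeed the classical route (this is Kostant's tensor-product multiplicity theorem; see, e.g., \cite[\S 24]{H}, which the paper already cites nearby for Steinberg's formula), but the recursion and the matching of its obstruction with the Serre-type condition remain to be carried out. Two remarks would sharpen the sketch. First, the two conditions on $v\in V_{\lambda-\mu}$ are redundant: fixing $i$ and working in the $\lie{sl}_2$ spanned by $x_{\alpha_i}^{\pm},h_i$, the $\lie{sl}_2$-weight of $v$ is $(\lambda-\mu)(h_i)$, and each of $(x_{\alpha_i}^+)^{\mu(h_i)+1}v=0$ and $(x_{\alpha_i}^-)^{\lambda(h_i)+1}v=0$ says exactly that every $\lie{sl}_2$-isotypic component of $v$ has highest weight at most $(\lambda+\mu)(h_i)$; so only one of the two need be extracted from the recursion, the other is then automatic. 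Second, the proposed ``cleaner alternative'' via the Verma module $M(\mu)$ gains nothing: because $V^*\otimes V(\lambda)$ is finite-dimensional, Frobenius reciprocity for $M(\mu)$ and part (i) applied to $V(\mu)$ produce the same space $(V^*\otimes V(\lambda))^{\lie n^+}_\mu$, so $\Hom_{\g}(V\otimes M(\mu),V(\lambda))\cong\Hom_{\g}(V\otimes V(\mu),V(\lambda))$ already, and the remaining step --- identifying $(V^*\otimes V(\lambda))^{\lie n^+}_\mu$ with the Serre-cut subspace of $V_{\lambda-\mu}$ --- is precisely Kostant's theorem again, not a simplification of it.
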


\subsection{}

We now discuss some results on specific sets of $\g$-module homomorphisms
which will be useful later. Recall that $\lambda \in P^+$ is said to be
{\it regular} if $\lambda(h_i) > 0$ for all $i \in I$.

\begin{lem} \label{L74}
Suppose $\Psi \subset \wt(V)$ satisfies condition \eqref{rigid}.
Define $\lambda_{\Psi} := \sum_{\mu \in \Psi} (\dim V_{\mu})\mu \in P$
and $N_{\Psi} := \sum_{\mu \in \Psi} (\dim V_{\mu})$.

\begin{enumerit}
\item If $\nu, \nu + \lambda_{\Psi} \in P^+$, then
$\dim \Hom_{\g}(\wedge^{N_{\Psi}}V\otimes V(\nu),V(\nu + \lambda_{\Psi}))
\leq 1$.

\item Given $\eta \in P^+$, there exists $\nu \in P^+$ such that $\nu,
\nu + \lambda_{\Psi} \in P^+$ are both regular, $\eta \leq \nu$, and
$$\dim \Hom_{\g}(\wedge^{N_{\Psi}}V\otimes V(\nu),V(\nu +
\lambda_{\Psi})) = 1.$$
\end{enumerit}
\end{lem}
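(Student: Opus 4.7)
The plan is to reduce both parts to a statement about the $\lambda_\Psi$-weight space of $\wedge^{N_\Psi} V$, and then exploit condition \eqref{rigid} in a combinatorial way.

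For part (i), I would first invoke \lemref{L73}(ii) with $W = \wedge^{N_\Psi} V$, $\mu = \nu$, and $\lambda = \nu + \lambda_\Psi$ to identify $\Hom_\g(\wedge^{N_\Psi} V \otimes V(\nu), V(\nu + \lambda_\Psi))$ with a subspace of $(\wedge^{N_\Psi} V)_{\lambda_\Psi}$. It therefore suffices to show that this weight space is at most one-dimensional. Pick any weight basis $\{v_\mu^{(1)}, \ldots, v_\mu^{(\dim V_\mu)}\}$ of each weight space $V_\mu$ of $V$. Then $(\wedge^{N_\Psi} V)_{\lambda_\Psi}$ is spanned by wedges $v_{\mu_1}^{(i_1)} \wedge \cdots \wedge v_{\mu_{N_\Psi}}^{(i_{N_\Psi})}$ with $\sum_j \mu_j = \lambda_\Psi$ and with $N_\Psi$ terms in the wedge. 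Applying condition \eqref{rigid} with $m_\alpha = \dim V_\alpha$ on the left and $r_\beta$ equal to the number of occurrences of the weight $\beta$ on the right, the equality $\sum_\alpha m_\alpha = N_\Psi = \sum_\beta r_\beta$ forces every contributing $\mu_j$ to lie in $\Psi$ and forces $r_\mu = \dim V_\mu$ for each $\mu \in \Psi$. Hence each such wedge must exhaust a full basis of $\bigoplus_{\mu \in \Psi} V_\mu$; by antisymmetry of the wedge product, all such wedges agree up to sign, so the weight space is at most one-dimensional.

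For part (ii), let $w \in (\wedge^{N_\Psi} V)_{\lambda_\Psi}$ denote the (unique up to scalar) nonzero element constructed above. To obtain a nonzero homomorphism via \lemref{L73}(ii) we need to find $\nu \in P^+$, with $\nu, \nu + \lambda_\Psi$ both regular and $\nu \geq \eta$, such that
\[
(x_{\alpha_i}^+)^{\nu(h_i)+1} w = 0, \qquad (x_{\alpha_i}^-)^{(\nu+\lambda_\Psi)(h_i)+1} w = 0,\qquad \forall i \in I.
\]
Since $w$ lies in the finite-dimensional $\g$-module $\wedge^{N_\Psi} V$, there is a uniform integer $M$ (for instance, $M = \dim \wedge^{N_\Psi} V$) such that $(x_{\alpha_i}^\pm)^M w = 0$ for every $i \in I$. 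I would then set $\nu := \eta + k\sum_{i \in I} \omega_i$ for $k$ chosen large enough that $\nu(h_i) + 1 > M$ and $(\nu + \lambda_\Psi)(h_i) + 1 > M$ for all $i$. Such a $\nu$ automatically lies in $P^+$, satisfies $\eta \leq \nu$, and both $\nu$ and $\nu + \lambda_\Psi$ are regular (since $\lambda_\Psi$ is an integral weight and $\omega_i(h_j) = \delta_{ij}$ dominates any fixed correction for large $k$). The vanishing conditions then hold by construction, so $w$ yields a nonzero element of $\Hom_\g(\wedge^{N_\Psi} V \otimes V(\nu), V(\nu+\lambda_\Psi))$. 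Combining with part (i), the $\Hom$-space has dimension exactly one.

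The substantive step is the combinatorial argument in part (i): identifying the $\lambda_\Psi$-weight space of the top exterior power as one-dimensional is exactly the content of the face-of-polytope hypothesis \eqref{rigid}, and the mild technical point is verifying that one can simultaneously make $\nu$ regular, dominate $\eta$, and dominate the uniform killing exponent $M$ — but this is achieved by translating by a large multiple of $\sum_i \omega_i$. Once these two pieces are in place the lemma follows immediately from \lemref{L73}(ii).
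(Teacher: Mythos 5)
Your proof is correct and follows essentially the same route as the paper: reduce via \lemref{L73}(ii) to the one-dimensionality of $(\wedge^{N_\Psi}V)_{\lambda_\Psi}$, use condition \eqref{rigid} to force all contributing weights into $\Psi$ and antisymmetry to identify the unique top wedge, then for (ii) translate $\eta$ by a large multiple of $\rho$ (or equivalently $\sum_i\omega_i$) to kill the relevant powers of the Chevalley generators and ensure regularity. One small phrasing wobble worth noting: condition \eqref{rigid} by itself forces only that every contributing weight lies in $\Psi$, not that each $\mu\in\Psi$ occurs exactly $\dim V_\mu$ times; the latter is a consequence of antisymmetry (one cannot wedge $N_\Psi$ vectors from an $N_\Psi$-dimensional space without using a full basis), which you do invoke, so the argument is sound.
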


\begin{proof}
Suppose $v_{\mu_1} \wedge \dots \wedge v_{\mu_{N_\Psi}} \in
(\wedge^{N_\Psi} V)_{\lambda_\Psi}$, where each $v_{\mu_i} \in
V_{\mu_i}$. Then
\[\mu_1 + \dots + \mu_{N_\Psi} = \lambda_\Psi = \sum_{\mu \in \Psi} (\dim
V_\mu) \mu, \ \ N_\Psi = \sum_{\mu \in \Psi} \dim V_\mu,\]
so $\mu_i \in \Psi\ \forall i$ by condition \eqref{rigid}. In particular,
$\dim (\wedge^{N_\Psi} V)_{\lambda_\Psi}  = 1$. Hence $(i)$ follows by
\lemref{L73}.

Now suppose that $(\wedge^{N_{\Psi}} V)_{\lambda_{\Psi}} = \C {\bf v}$.
Let
\[\lambda_{\Psi} = \sum_{i\in I} d_i\omega_i,  \ \ \eta = \sum_{i\in I}
c_i\omega_i.\]

Let $2\rho = \sum_{\alpha \in R^+} \alpha = 2\sum_{i\in I} \omega_i$.
Choose $k \in \Z_+$ sufficiently large such that $$c_i + 2k, c_i + d_i +
2k \in \mathbb{N}$$ and
$$(x_{\alpha_i}^+)^{c_i+2k+1}{\bf v} = (x_{\alpha_i}^-)^{c_i+d_i + 2k +
1}{\bf v} = 0 \ \forall \ i\in I.$$

\noindent Let $\nu = \eta + 2k\rho$. Then, $\eta \leq \nu$, and it
follows from \lemref{L73} that
$$\Hom_{\g}(\wedge^{N_{\Psi}}V\otimes V(\nu), V(\nu + \lambda_{\Psi}))
\cong \C{\bf v},$$
which proves $(ii)$.
\end{proof}

\subsection{} \begin{lem}\label{L75}
Fix $(\mu,r) \in \Lambda$. Suppose $\Gamma \subset \Lambda$ is finite and
interval-closed with respect to $\popsi$. Also, assume that $(\mu,r)
\popsi (\nu,s) \ \forall \ (\nu,s) \in \Gamma$.
\begin{enumerit}
\item If $\Hom_{\curs{G}[\Gamma]}(P(\nu,s)^{\Gamma},P(\nu',s')^{\Gamma})
\neq 0$, then $(\nu',s') \popsi (\nu,s)$.

\item If $\Ext^j_{\curs{G}[\Gamma]}(V(\nu',s'),V(\nu,s))\neq 0$, then
$(\nu',s')\popsi (\nu,s),\ j = d_{\Psi}(\nu',\nu)$.

\item $\gldim \curs{G}[\Gamma] \leq N_{\Psi}$, and equality holds for
some $(\mu,r) \in \Lambda$ and some $\Gamma$.
\end{enumerit}
\end{lem}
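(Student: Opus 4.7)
All three parts turn on converting the $\G[\Gamma]$-level information into $\g$-module data and then applying the rigidity condition \eqref{rigid} to the two different decompositions of $\nu-\nu'$ that arise.

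For part $(i)$, I would first invoke \propref{P54}(iii) to identify
$\Hom_{\G[\Gamma]}(P(\nu,s)^{\Gamma},P(\nu',s')^{\Gamma})$ with
$\Hom_{\ghat}(P(\nu,s),P(\nu',s'))$, and then \propref{P41}(i) to rewrite this as $\Hom_{\g}(V(\nu),P(\nu',s')[s])$. Since $\lie a_+ = V$ is abelian, $P(\nu',s')[s]\cong\Sym^{s-s'}(V)\otimes V(\nu')$, so the hypothesis forces $\nu-\nu'$ to be a weight of $\Sym^{s-s'}(V)$, giving $\nu-\nu'=\sum_{\gamma\in\wt(V)} r_\gamma\gamma$ with $\sum r_\gamma = s-s'$. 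Using the hypotheses $(\mu,r)\popsi(\nu,s)$ and $(\mu,r)\popsi(\nu',s')$, I would write
$\nu-\mu = \sum_{\beta\in\Psi} m_\beta \beta$ (with $\sum m_\beta=s-r$) and $\nu-\mu = \sum_{\beta\in\Psi} m'_\beta \beta + \sum_{\gamma}r_\gamma\gamma$ (with total length $(s'-r)+(s-s')=s-r$). Condition \eqref{rigid} applied to these two expressions of $\nu-\mu$ shows equality of lengths, forcing every $\gamma$ with $r_\gamma>0$ to lie in $\Psi$. Hence $\nu'\lpsi\nu$ with $d_\Psi(\nu',\nu)\le s-s'$, and \propref{P72}(i) yields $d_\Psi(\nu',\nu)=d_\Psi(\mu,\nu)-d_\Psi(\mu,\nu')=(s-r)-(s'-r)=s-s'$, i.e.\ $(\nu',s')\popsi(\nu,s)$.

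For part $(ii)$, I would use \propref{P62} to transfer to $\Ext^j_{\ghat}(V(\nu',s'),V(\nu,s))$ and then \propref{P61} to reduce to the conditions $j=s-s'$ and $\Hom_{\g}(\wedge^j V\otimes V(\nu'),V(\nu))\neq 0$. The latter says that $\nu-\nu'=\gamma_1+\cdots+\gamma_j$ is a weight of $\wedge^j V$ with each $\gamma_k\in\wt(V)$. The same rigidity argument as in $(i)$ (with $\wedge^j V$ replacing $\Sym^{s-s'}V$) then forces each $\gamma_k\in\Psi$ and $j=d_\Psi(\nu',\nu)=s-s'$, proving $(\nu',s')\popsi(\nu,s)$ and the claimed value of $j$.

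For part $(iii)$, the upper bound follows by pushing the argument of $(ii)$ one step further. If $\Ext^j_{\G[\Gamma]}(V(\nu',s'),V(\nu,s))\neq 0$, the previous paragraph shows that the weight space $(\wedge^j V)_{\nu-\nu'}$ supporting the nonzero Hom is contained in $\wedge^j V_\Psi$, where $V_\Psi:=\bigoplus_{\xi\in\Psi} V_\xi$ has dimension $N_\Psi$; thus $\wedge^j V_\Psi=0$ whenever $j>N_\Psi$, giving $\gldim\G[\Gamma]\le N_\Psi$. For the sharpness, I would apply \lemref{L74}(ii) to produce $\nu\in P^+$ with $\nu+\lambda_\Psi\in P^+$ and $\dim\Hom_{\g}(\wedge^{N_\Psi}V\otimes V(\nu),V(\nu+\lambda_\Psi))=1$; taking $(\mu,r):=(\nu,0)$ and $\Gamma:=[(\nu,0),(\nu+\lambda_\Psi,N_\Psi)]_{\popsi}$ (which equals the corresponding $\po$-interval by \propref{P72}(ii), hence is finite and interval-closed) yields a nonzero $\Ext^{N_\Psi}_{\G[\Gamma]}(V(\nu,0),V(\nu+\lambda_\Psi,N_\Psi))$.

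\textbf{Main obstacle.} The only nonroutine input is the correct deployment of condition \eqref{rigid}: one must read off the two alternative decompositions of $\nu-\mu$ (one through $\Psi$ as dictated by the $\popsi$-chain, the other through arbitrary $\wt(V)$ as dictated by $\Sym^{s-s'}V$ or $\wedge^jV$), check that their lengths match, and then conclude that the second decomposition in fact uses only elements of $\Psi$. Once this rigidity step is in place, everything else is a formal consequence of the projective resolution of \propref{P44} and the equivalences from \propref{P54} and \propref{P62}.
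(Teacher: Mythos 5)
Your proposal is correct and follows essentially the same route as the paper: transfer to $\g$-module Hom spaces via Propositions \ref{P54}, \ref{P41}, \ref{P61}, and \ref{P62}, extract two decompositions of $\nu-\mu$ (one through $\Psi$ from the $\popsi$-hypothesis, one through $\wt(V)$ from $\Sym^{s-s'}V$ or $\wedge^jV$), and invoke the rigidity condition \eqref{rigid} together with \propref{P72} to force the second decomposition into $\Psi$. You also make explicit the step the paper leaves implicit in part (iii), namely that rigidity confines the relevant weight space of $\wedge^j V$ to $\wedge^j V_\Psi$, which vanishes for $j>N_\Psi$; and you use the correct orientation $\Hom_{\g}(\wedge^j V\otimes V(\nu'),V(\nu))$ in part (ii), where the paper's text has its arguments transposed (harmlessly, since the rigidity step is symmetric).
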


\begin{proof}\hfill
\begin{enumerit}
\item Suppose that
$\Hom_{\curs{G}[\Gamma]}(P(\nu,s)^{\Gamma},P(\nu',s')^{\Gamma}) \neq 0$.
Then, by \propref{P54} and \propref{P41},
\[\Hom_{\g}(V(\nu),\Sym^{s-s'} V \otimes V(\nu')) \neq 0.\]

\noindent Using \lemref{L73} and Steinberg's formula \cite[\S 24]{H},
$$\nu-\nu' = \sum_{i=1}^{s-s'} \xi_i, \ \ \xi_i \in \wt(V).$$

\noindent On the other hand, since $(\mu,r) \popsi (\nu,s),(\nu',s') \in
\Gamma$,
$$\nu - \mu = \sum_{j=1}^{s-r} \eta_j, \ \ \nu' - \mu = \sum_{k=1}^{s'-r}
\eta'_k, \ \ \eta_j,\eta'_k \in \Psi \ \forall j,k.$$ Combining these
gives
$$\nu-\mu = \sum_{j=1}^{s-r} \eta_j = \sum_{i=1}^{s-s'} \xi_i +
\sum_{k=1}^{s'-r} \eta'_k.$$

\noindent Finally, since $\Psi$ satisfies \eqref{rigid} and $s - r =
(s-s')+(s'-r)$, we get $\xi_i \in \Psi \ \forall i$, whence $(\nu',s')
\popsi (\nu,s)$.

\item Suppose $\Ext^j_{\curs{G}[\Gamma]}(V(\nu',s'),V(\nu,s))\neq 0$. By
Propositions \ref{P61} and \ref{P62}, $j = s-s'$ and \[\Hom_{\g}(\wedge^j
V \otimes V(\nu),V(\nu')) \neq 0.\] Using \lemref{L73}, $\nu' - \nu =
\xi_1 + \dots + \xi_j$ for some $\xi_i \in \wt(V)$. Since $\Psi$
satisfies \eqref{rigid}, an argument similar to part $(i)$ shows that
$\xi_i \in \Psi \ \forall \ i$ and $\nu \lpsi \nu'$. Finally, by
\propref{P72}, $j = d_{\Psi}(\nu, \nu')$ and, therefore, $(\nu,s) \popsi
(\nu',s')$.

\item Since $\curs{G}[\Gamma]$ is a length category, it suffices to work
with extensions between simple objects. By
Propositions \ref{P61} and \ref{P62} again, we have
\[ \Ext^j_{\curs{G}[\Gamma]}(V(\nu,s),V(\nu',s')) \neq 0 \implies
\Hom_{\g}(\wedge^j V \otimes V(\nu),V(\nu')) \neq 0, \]

\noindent so $\gldim \curs{G}[\Gamma] \leq N_{\Psi}$.

Using \lemref{L74}, \[\Hom_{\curs{G}[\Gamma]}(\wedge^{N_{\Psi}}V\otimes
V(\mu),V(\mu + \lambda_{\Psi})) \neq 0\] for some $\mu, \mu +
\lambda_\Psi \in P^+$. Let $r \in \Z$ and define
\[\Gamma := [(\mu,r),(\mu+\lambda_{\Psi},r + N_{\Psi})]_{\popsi}.\]
Then, $\gldim \curs{G}[\Gamma] = N_{\Psi}$.\hfill{\qedhere}
\end{enumerit}
\end{proof}

\subsection{}

\begin{thm}\label{T3}
Assume that $\Gamma \subset \Lambda$ is finite and interval-closed under
$\popsi$. Then, the algebra $\lie{B}(\Gamma)^{\op}$ is Koszul.
\end{thm}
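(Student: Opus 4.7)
The plan is to apply the homological criterion of Beilinson--Ginzburg--Soergel: a non-negatively graded algebra $A = \bigoplus_{k \geq 0} A_k$ with $A_0$ semisimple is Koszul if and only if, for every pair of simple graded $A$-modules $L, L'$ placed in internal degree zero, $\Ext^j_A(L, L')$ is concentrated in internal degree $j$ for all $j \geq 0$. Our task is to verify this for $A = \lie{B}(\Gamma)^{\op}$.

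First I would check the standing hypotheses. Since $\Hom_{\ghat}(P(\lambda, r), P(\mu, s)) = 0$ whenever $s > r$ -- the generator $p_{\lambda, r}$ sits in degree $r$, but $P(\mu, s)$ has no component in degrees below $s$ -- both $\lie{B}(\Gamma)$ and $\lie{B}(\Gamma)^{\op}$ are non-negatively graded. The degree-zero part $\lie{B}(\Gamma)[0] = \bigoplus_{(\lambda, r) \in \Gamma} \End_{\ghat}(P(\lambda, r))$ is a direct sum of copies of $\C$, by \propref{P41}(ii) and Schur's lemma, and hence semisimple. Via the Morita equivalence $\curs{G}[\Gamma] \simeq \Mod - \lie{B}(\Gamma) = \lie{B}(\Gamma)^{\op} - \Mod$ of the preceding Proposition, the simple objects $V(\mu, r)$ correspond to simple graded $\lie{B}(\Gamma)^{\op}$-modules $S(\mu, r)$; since the equivalence is compatible with gradings, the grade shift $s - r$ in $\curs{G}[\Gamma]$ corresponds to the internal degree shift of $s - r$ on the module side.

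Combining \propref{P61} and \propref{P62} then gives
$$\Ext^j_{\curs{G}[\Gamma]}(V(\mu, r), V(\nu, s)) \neq 0 \implies j = s - r,$$
which, transferred through the Morita equivalence, says that $\Ext^j_{\lie{B}(\Gamma)^{\op}}(S(\mu, r), S(\nu, s))$ is concentrated in internal degree $j$. This is exactly the BGS Koszul condition, and the criterion yields the theorem. The principal obstacle is not the $\Ext$ computation -- already handled in Section 6 -- but rather the bookkeeping needed to confirm that the equivalence of the preceding Proposition is genuinely an equivalence of graded categories identifying the internal grading on $\Ext$ groups with the grade shift in $\curs{G}[\Gamma]$; this is a standard feature of graded Morita equivalences but must be tracked carefully. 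An alternative, more direct route is to observe that the resolution of \propref{P44}(ii) is already ``linear'': the $j$-th term $P_j(\mu, r)^{\Gamma}$ is generated in grade $r + j$ because $\wedge^j V \otimes V(\mu, r)$ is concentrated there, so the image under the equivalence is a linear projective resolution of $S(\mu, r)$, which is one of the standard equivalent formulations of Koszulity.
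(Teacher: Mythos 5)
Your proposal is correct, but it reaches the conclusion by a genuinely different route than the paper. The paper invokes the \emph{numerical} criterion of \cite[Theorem 2.11.1]{BGS}: it computes the product of the Hilbert matrices $H(E(B),-t)\,H(B,t)$ entry by entry, collapses the alternating sum $\sum_j(-1)^j\dim\Ext^j(\cdot,\cdot)$ to a Kronecker delta using the Euler characteristic of the functor $\Hom(P(\nu',s')^\Gamma,-)$, and concludes Koszulity from the resulting matrix identity. You instead go straight to the \emph{homological} characterization (equivalently, the existence of a linear projective resolution of the degree-zero part), fed by the Ext concentration from Propositions \ref{P61} and \ref{P62}. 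Both arguments ultimately rest on the same computation --- $\Ext^j_{\ghat}(V(\mu,r),V(\nu,s))\neq0$ forces $j=s-r$ --- so neither is doing dramatically more work than the other; your route is arguably cleaner because it goes directly to the definition, whereas the paper's detour through Hilbert series makes the interplay with the poset $\popsi$ and the Jordan--H\"older multiplicities more explicit (and in fact simultaneously verifies the Hilbert-matrix identity, which the paper uses later to control global dimension in Lemma \ref{L76}).

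One remark on the bookkeeping you flag. The Morita equivalence of the preceding Proposition identifies $\curs{G}[\Gamma]$ not with all finite-dimensional graded $\lie{B}(\Gamma)^{\op}$-modules but with the full Serre subcategory whose composition factors are the shifted simples $L_{(\lambda,r)}\langle r\rangle$ (one specific shift per index in $\Gamma$, not all shifts). So the Ext computation in $\curs{G}[\Gamma]$ directly gives you $\operatorname{ext}^j\bigl(L_{(\mu,r)},L_{(\nu,s)}\langle s-r\rangle\bigr)$ for the single shift $n=s-r$, whereas the BGS criterion requires vanishing of $\operatorname{ext}^j\bigl(L_{(\mu,r)},L_{(\nu,s)}\langle n\rangle\bigr)$ for $j\neq n$ and \emph{all} $n$. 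Your alternative route (the linear resolution) closes that gap painlessly: since $P_j(\mu,r)^\Gamma$ is generated purely in grade $r+j$, its Morita image is a projective generated purely in degree $j$ after the shift by $-r$, so $\Hom$ from the $j$-th term to $L_{(\nu,s)}\langle n\rangle$ is already zero unless $n=j$, uniformly in $n$. I would make the linear-resolution route the primary argument rather than the ``alternative''; it handles the shift bookkeeping automatically.
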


\begin{proof}
We use the numerical condition from \cite[Theorem 2.11.1]{BGS} to show
Koszulity. Let $B = \lie{B}(\Gamma)^{\op}$. We note that the $|\Gamma|
\times |\Gamma|$-Hilbert matrices $H(B,t)$ of $B$ and $H(E(B),t)$ of its
Yoneda algebra $E(B)$ are lower triangular in this case.

Note from the definition of the grading that $B[0]$ is semisimple,
commutative, and spanned by pairwise orthogonal idempotents
$\{1_{(\nu,s)}:(\nu,s) \in \Gamma\}$. For each $(\nu',s') \popsi (\nu,s)
\in \Gamma$, we compute:
$$\begin{array}{l}
(H(E(B),-t)H(B,t))_{(\nu,s),(\nu',s')}\\
= \displaystyle \sum_{(\xi,l) \in \Gamma} H(E(B),-t)_{(\nu,s),(\xi,l)}
H(B,t)_{(\xi,l),(\nu',s')} \\
= \displaystyle \sum_{\nu' \lpsi \xi \lpsi \nu} (-t)^{d_{\Psi}(\xi,\nu)}
\dim \Ext_{\curs{G}[\Gamma]}^{d_{\Psi}(\xi,\nu)}(V(\xi,l),V(\nu,s)) \cdot
t^{d_{\Psi}(\nu',\xi)} [P(\nu',s')^{\Gamma}:V(\xi,l)] \\
= \displaystyle \sum_{j\geq 0} \sum_{\nu' \lpsi \xi\lpsi\nu} (-1)^j
t^{d_{\Psi}(\nu',\nu)} [P(\nu',s')^{\Gamma}:V(\xi,l)] \dim
\Ext^j_{\curs{G}[\Gamma]}(V(\xi,l),V(\nu,s)), \\
\end{array}$$

\noindent where the first equality is by definition, the second uses the
definitions of the Hilbert matrices, and the third uses Proposition
\ref{P72} and Lemma \ref{L75}.
Now use the long exact sequence of Ext groups and a Jordan-Holder series
for $P(\nu',s')^\Gamma$, along with the fact that all $P(\nu',s')^\Gamma$
are projective, to obtain:
$$\begin{array}{l}
= \displaystyle t^{d_{\Psi}(\nu',\nu)}\sum_{j\geq 0}
(-1)^j\dim\Ext^j_{\curs{G}[\Gamma]}(P(\nu',s')^{\Gamma},V(\nu,s)) \\
= \displaystyle t^{d_{\Psi}(\nu',\nu)} \dim
\Hom_{\curs{G}[\Gamma]}(P(\nu',s')^{\Gamma},V(\nu,s)) \\
= t^{d_{\Psi}(\nu',\nu)}\delta_{(\nu',s'),(\nu,s)} =
\delta_{(\nu',s'),(\nu,s)}.
\end{array}$$

\noindent Thus, $H(E(B),-t)H(B,t)$ is the identity matrix, so $B =
\lie{B}(\Gamma)^{\op}$ is Koszul by \cite[Theorem 2.11.1]{BGS}.
\end{proof}

\subsection{}

We are now ready to approach the proof of \thmref{T1}. The following
Lemma will provide a major component of the proof.
Let $\pi_1 : \Lambda \to P^+$ be the projection map onto the first
coordinate. Recall that $A = \Sym V$.

\begin{lem}\label{L76}
Fix $(\mu,r) \in \Lambda$. Let $\Gamma \subset \Lambda$ be finite and
interval-closed with $(\mu,r) \popsi (\nu,s) \ \forall \ (\nu,s) \in
\Gamma$. Then, $\lie{B}(\Gamma)^{\op}$ has global dimension at most
$N_{\Psi}$, and
$$\lie{B}(\Gamma)^{\op} \cong \A_{\Psi}(\pi_1(\Gamma))^{\g}$$
as $\Z_+$-graded algebras.
\end{lem}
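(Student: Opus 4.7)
The plan is to prove the two assertions separately.

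For the global dimension bound, I would observe that since $\Gamma$ is finite, only finitely many graded pieces of $\lie{B}(\Gamma)$ are nonzero and each is a finite-dimensional sum of Hom spaces; hence $\lie{B}(\Gamma)$ is a finite-dimensional algebra. Its left and right global dimensions therefore agree and coincide with those of $\lie{B}(\Gamma)^{\op}$. Combining the Morita equivalence $\curs{G}[\Gamma] \simeq \Mod-\lie{B}(\Gamma)$ from the preceding proposition with \lemref{L75}(iii) then yields
$$\gldim \lie{B}(\Gamma)^{\op} = \gldim \lie{B}(\Gamma) = \gldim \curs{G}[\Gamma] \le N_{\Psi}.$$

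For the algebra isomorphism, the hypothesis $(\mu,r) \popsi (\nu,s)$ for every $(\nu,s) \in \Gamma$ forces $s = r + d_\Psi(\mu,\nu)$, so $\pi_1$ restricts to a bijection $\Gamma \to F := \pi_1(\Gamma)$ with inverse $\nu \mapsto (\nu, s_\nu)$, where $s_\nu := r + d_\Psi(\mu,\nu)$. For $\lambda, \nu \in F$, I would chain the identifications
$$\Hom_{\ghat}(P(\lambda,s_\lambda), P(\nu,s_\nu)) \cong \Hom_\g(V(\lambda), P(\nu,s_\nu)[s_\lambda]) \cong \Hom_\g(V(\lambda), \Sym^{s_\lambda - s_\nu} V \otimes V(\nu))$$
using \propref{P41}(i) and \propref{P32}, then pass to $\g$-invariants via the standard duality and \lemref{L21}:
$$\Hom_\g(V(\lambda), \Sym^{s_\lambda - s_\nu} V \otimes V(\nu)) \cong (1_\lambda \A[s_\lambda - s_\nu] 1_\nu)^{\g}.$$
An argument along the lines of the proof of \lemref{L75}(i), using \lemref{L73}, the rigidity condition \eqref{rigid}, and \propref{P72}, shows that nonvanishing of this space forces $\nu \lpsi \lambda$ and $s_\lambda - s_\nu = d_\Psi(\nu,\lambda)$, so the summand agrees with $\A_\Psi(\lambda,\nu)^{\g}$. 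Summing over $\lambda,\nu \in F$ produces a graded vector-space isomorphism $\Phi : \lie{B}(\Gamma) \to \A_\Psi(F)^{\g}$, with matching gradings by construction.

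The final step, which is the main calculational obstacle, is to verify that $\Phi$ is an anti-isomorphism of algebras, so that it descends to $\lie{B}(\Gamma)^{\op} \cong \A_\Psi(F)^{\g}$. Given $\phi : P(\lambda,s_\lambda) \to P(\nu,s_\nu)$ and $\psi : P(\nu,s_\nu) \to P(\xi,s_\xi)$, I would fix dual bases $\{u_j\}$ of $V(\nu)$ and $\{u_j^*\}$ of $V(\nu)^*$ and write $\Phi(\phi) = \sum_{i,j} a_{ij} \otimes v_i^* \otimes u_j$ and $\Phi(\psi) = \sum_{j,m} b_{jm} \otimes u_j^* \otimes w_m$ in analogous dual bases of $V(\lambda)^*$ and $V(\xi)$. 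The multiplication formula on $\A$ from Section 1.3 gives
$$\Phi(\phi) \cdot \Phi(\psi) = \sum_{i,j,m} a_{ij} b_{jm} \otimes v_i^* \otimes w_m,$$
while a direct calculation using $\phi(v_i) = \sum_j a_{ij} \otimes u_j$ and $\psi(u_j) = \sum_m b_{jm} \otimes w_m$ shows that the composition $(\psi \circ \phi)(v_i)$ equals exactly the same expression inside $\Sym^\bullet V \otimes V(\xi)$. Since the product in $\lie{B}(\Gamma)^{\op}$ of the classes of $\phi$ and $\psi$ is by definition (the class of) $\psi \circ \phi$, this yields the desired graded algebra isomorphism $\lie{B}(\Gamma)^{\op} \cong \A_\Psi(\pi_1(\Gamma))^{\g}$.
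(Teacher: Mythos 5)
Your argument is correct and follows essentially the same route as the paper's: the chain of identifications via \lemref{L21}, \propref{P32}, \propref{P41}(i), and the standard isomorphism $(V^*\otimes W)^{\g}\cong\Hom_{\g}(V,W)$, together with the global-dimension bound via the Morita equivalence and \lemref{L75}(iii). Where the paper merely remarks that ``the product is from left to right, whereas the composition of the Hom-spaces is from right to left,'' you carry out the explicit dual-basis computation verifying that $\Phi(\psi\circ\phi)=\Phi(\phi)\cdot\Phi(\psi)$ — this is a useful elaboration but the same idea, not a different method.
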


\begin{proof}
By definition,
$$1_{\nu}\A_{\Psi}^{\g}[d_{\Psi}(\mu,\nu)]1_{\mu} = (V(\nu)^* \otimes
\Sym^{d_{\Psi}(\mu,\nu)} V \otimes V(\mu))^{\g}.$$

\noindent For any finite-dimensional $\g$-modules, $V,W$, the map
$$\sum_i (f_i \otimes w_i) \to (v\mapsto \sum_if_i(v)w_i)$$

\noindent gives an isomorphism $(V^*\otimes W)^{\g} \cong
\Hom_{\g}(V,W)$. In particular,
$$(V(\nu)^* \otimes \Sym^{d_{\Psi}(\mu,\nu)} V \otimes V(\mu))^{\g} \cong
\Hom_{\g}(V(\nu),\Sym^{d_{\Psi}(\mu,\nu)} V \otimes V(\mu)).$$

\noindent Finally,
\begin{eqnarray*}
P(\mu,r)[r+d_{\Psi}(\mu,\nu)] & = & (\univ(\lie{a})\otimes_{\univ(\g)}
V(\lambda,r))[r+d_{\Psi}(\mu,\nu)]\\
& = & \Sym^{d_{\Psi}(\mu,\nu)} V \otimes V(\lambda,r)
\end{eqnarray*}

\noindent by \propref{P32}, and
$$\Hom_{\g}(V(\nu),\Sym^{d_{\Psi}(\mu,\nu)} V \otimes V(\mu)) \cong
\Hom_{\ghat}(P(\nu, r + d_{\Psi}(\mu,\nu)),P(\mu,r))$$

\noindent by \propref{P41}.

Notice that the product of the terms $1_{\nu}
\A_{\Psi}^{\g}[d_{\Psi}(\mu,\nu)] 1_{\mu}$ is from left to right, whereas
the composition of the Hom-spaces is from right to left. The bound on
global dimensions follows from \lemref{L75}.
\end{proof}

\subsection{} We are now able to prove our main result:

\begin{proof}[Proof of \thmref{T1}]
Notice first that $\lpsi \nu$ and $[\mu,\nu]_{\Psi}$ are finite and
interval-closed, so $\A_{\Psi}(\lpsi \nu)^{\g}$ and
$\A_{\Psi}([\mu,\nu]_{\Psi})^{\g}$ are Koszul and have finite global
dimension by \lemref{L76} and \thmref{T3}.

It remains to show the results for $\A_{\Psi}(\mu \lpsi)^{\g}$ and
$\A_{\Psi}^{\g}$. We begin by showing that the algebras in question have
finite global dimension. We only show the proof for $\A_{\Psi}^{\g}$; the
proof is similar for $\A_{\Psi}(\mu \lpsi)^{\g}$.

Suppose $\mu \in P^+$, and let $S_{\mu}$ be the simple left
$\A_{\Psi}^{\g}$-module corresponding to the idempotent $1_{\mu}$. Recall
that $1_{\nu}\A_{\Psi}^{\g} 1_{\mu} \neq 0$ only if $\mu \lpsi \nu$ by
\lemref{L76}. Thus, the projective cover of $S_{\mu}$ in the category of
finite-dimensional left $\A_{\Psi}^{\g}$-modules is
$$P_{\mu} := \A_{\Psi}^{\g} 1_{\mu} = \bigoplus_{\mu \lpsi \nu}
1_{\nu}\A_{\Psi}^{\g} 1_{\mu} = \A_{\Psi}(\mu \lpsi)^{\g} 1_{\mu}.$$

\noindent As in \propref{P41}, this yields that
\[ [P_{\nu}:S_{\mu}] > 0 \Longrightarrow \mu \lpsi \nu, \]

\noindent so we obtain a projective resolution of $S_{\mu}$ in the
category of finite-dimensional left $\A_{\Psi}(\mu \lpsi)^{\g}$-modules.
Applying $\Hom_{\A_{\Psi}^{\g}}(-,S_{\nu})$ to this projective resolution
and using \lemref{L76} and \lemref{L74}, the statements on the global
dimension follow from the result for $\A_{\Psi}(\lpsi \nu)^{\g}$ and
$\A_{\Psi}([\mu,\nu]_{\Psi})^{\g}$.

It remains to show that $\A_{\Psi}^{\g}$ and $\A_{\Psi}(\mu \lpsi)^{\g}$
are Koszul. The proof is finished by adapting the proof of the analogous
theorem in \cite{CG2} while keeping in mind that we use a different
definition for $\lpsi$ and the reverse ordering on the summands of ${\bf
A}$. A brief summary of the proof for $\A_{\Psi}^{\g}$ is provided for
the reader (see \cite{Ri} for more details):

Let ${\bf T}V = \vstar \otimes T(V) \otimes {\bf V}$. Use the kernel of
the canonical projection  $\Pi: {\bf T}V \to \A$ to show that
$\A_{\Psi}^{\g}$ is quadratic. Finally, show that the Koszul resolution
of $\A_{\Psi}^{\g}$ is exact, which shows that $\A_{\Psi}^{\g}$ is Koszul
by \cite[Theorem 2.6.1]{BGS}.
\end{proof}

We conclude by remarking that it is possible to construct a linear graded
resolution for the algebras addressed in \thmref{T1}. More generally,
such a resolution has been constructed for every Koszul algebra in
\cite[Theorem 2.6.1]{BGS}.

\end{document}